\documentclass{amsart}
\usepackage{mathscinet,amssymb,latexsym,stmaryrd,phonetic}
\usepackage{enumerate,mathrsfs,hyperref,cmll}
\usepackage{xcolor,centernot}

\newtheorem{theorem}{Theorem}[section]
\newtheorem{proposition}[theorem]{Proposition}
\newtheorem{lemma}[theorem]{Lemma}

\newtheorem{question}[theorem]{Question}
\newtheorem{corollary}[theorem]{Corollary}

\theoremstyle{definition}
\newtheorem{definition}[theorem]{Definition}

\newtheorem{claim}[theorem]{Claim}

\theoremstyle{remark}
\newtheorem{remark}[theorem]{Remark}

\numberwithin{equation}{section}
\newcommand{\onethin}{1\text{-}\mathsf{thin}}
\newcommand{\ZFC}{\mathsf{ZFC}}

\newcommand{\fs}{\mathop{\mathrm{FS}}}

\newcommand{\kthin}{k\text{-}\mathsf{thin}}

\newcommand{\restr}[2]{#1\upharpoonright {#2}}
\newenvironment{proofclaim}[1][Proof of the claim]{\textbf{#1.} }{\hfill \rule{0.5em}{0.5em}}

\begin{document}
\title{$\mathbb{Z}$-Ramsey ultrafilters}



\author[Corona-Garc\'{\i}a]{J. A. Corona-Garc\'{\i}a}
\address{Posgrado Conjunto en Ciencias Matem\'aticas UNAM-UMSNH\\Morelia, Michoac\'an\\ M\'exico 58089.}
\curraddr{}
\email{jcorona@matmor.unam.mx}
\thanks{{The first author's research has been supported by Secihti, scholarship 298332, and by PAPIIT grant IN104419. The research of the second author was partially supported  by grants SIP-20253559 and SIP-20260817 from IPN, as well as by grant CBF2023-2024-334 from Secihti. The third named author was partially supported by PAPIIT grants IN104419 and IN116225.}}

\author[Fern\'{a}ndez-Bret\'{o}n]{D. J. Fern\'{a}ndez-Bret\'{o}n}
\address{Escuela Superior de F\'sica y Matem\'aticas\\ Instituto Polit\'ecnico Nacional\\ Av. Instituto Polit\'ecnico Nacional s/n Edificio 9 \\ Col. San Pedro Zacatenco\\ Alcald\'ia Gustavo A. Madero\\ 07738\\ CDMX\\ M\'exico.}
\curraddr{}
\email{dfernandezb@ipn.mx}
\urladdr{https://dfernandezb.web.app}

\author[Ramos-Garc\'{\i}a]{U. A. Ramos-Garc\'{\i}a}
\address{Centro de Ciencias Matem\'aticas\\ Universidad Nacional Aut\'onoma de M\'exico\\ Campus Morelia\\Morelia, Michoac\'an\\ M\'exico 58089.}
\curraddr{}
\email{ariet@matmor.unam.mx}

\subjclass[2010]{Primary 03E35; Secondary 03E05, 05D10, 54D80}
 
\date{\today}

\keywords{$\mathbb{Z}$-Ramsey ultrafilter, selective ultrafilter, P-point, $F_{\sigma}$-ideal, $\mathbb{Z}$-Ramsey theorem, central set}

\begin{abstract}
We study \(\mathbb{Z}\)-Ramsey ultrafilters, ultrafilters containing witnesses to every shift-invariant instance of Ramsey's theorem. We prove that it is consistent that there are no $\mathbb Z$-Ramsey ultrafilters. We also prove that every \((\mathbb{Z},3)\)-Ramsey ultrafilter, as well as every \(\mathbb{Z}\)-Ramsey P-point, is selective. Further, we exhibit a generic extension—using quotient algebras of the form \(\mathcal{P}(\mathbb{Z})/\mathcal{I}\) for certain \(F_{\sigma }\)-ideals—that contains P-points that are not \(\mathbb{Z}\)-Ramsey ultrafilters, thereby addressing open questions raised by Petrenko and Protasov ~\cite{MR3376354,MR3681104}.
\end{abstract}

\maketitle

\section{ Introduction}

The main object of study in this paper are $\mathbb Z$-Ramsey ultrafilters: ultrafilters containing homogeneous sets for every colouring that is invariant with respect to the usual shift action of $\mathbb Z$ on itself. These ultrafilters were first introduced by Petrenko and Protasov~\cite{MR3376354}, in the more general context of a $G$-Ramsey ultrafilter over any $G$-set $X$ (defined as ultrafilters on $X$ containing $c$-homogeneous sets for every colouring $c$ of $[X]^2$ that is invariant under the action of $G$ on $X$). They also introduced $G$-selective ultrafilters (ultrafilters with the property that for each $G$-invariant partition of $X$, either one of the pieces of the partition belongs to the ultrafilter, or the ultrafilter contains a selector for the partition); in a related paper~\cite{MR3681104} (published two years later but probably written concurrently), the same two authors proved that, in general, any $G$-Ramsey ultrafilter is $G$-selective~\cite[Theorem 5.3]{MR3681104} but the reverse implication does not hold (because, e.g., $G$-selective ultrafilters can always be proven to exist in $\mathsf{ZFC}$~\cite[Theorem 5.1]{MR3681104}, whereas the same cannot be said of $G$-Ramsey ultrafilters, as we will discuss in what follows).

So, more concretely, we will say that an ultrafilter $u$ on $\mathbb Z$ is {\bf $(\mathbb Z,k)$-Ramsey} if, for every colouring $c:[\mathbb Z]^k\longrightarrow 2$ that is invariant under the shift action of $\mathbb Z$ (that is, with the property that $c(\{x_1,\ldots,x_k\})=c(\{x_1+z,\ldots,x_k+z\})$ for every $z\in\mathbb Z$), the ultrafilter contains a $c$-homogeneous set. We will simply say that an ultrafilter is $\mathbb Z$-Ramsey when it is $(\mathbb Z,2)$-Ramsey. One could say that, in the same way that selective ultrafilters are related to Ramsey's theorem, $\mathbb Z$-Ramsey ultrafilters are related to the $\mathbb Z$-Ramsey theorem from~\cite{fb-afml}. The paper~\cite{MR3681104} contains a plethora of results regarding $\mathbb Z$-Ramsey ultrafilters (particularly in terms of necessary conditions, as well as a number of properties of ultrafilters that imply that they are {\it not} $\mathbb Z$-Ramsey ultrafilters). Of particular importance are certain theorems concerning selective ultrafilters, such as~\cite[Theorem 2.6]{MR3376354}, stating that every $\mathbb Z$-Ramsey ultrafilter containing a $1$-thin set (to be defined below) must be selective; in a similar vein,~\cite[Corollary 2.7]{MR3681104} establishes that every $\mathbb Z$-Ramsey ultrafilter that is also a Q-point, must necessarily be selective. Notably, it is also the case that $(\mathbb Z,4)$-Ramsey ultrafilters must be selective~\cite[Corollary 2.8]{MR3681104}; in this paper we improve that result to $(\mathbb Z,3)$-ultrafilters (see Theorem~\ref{thm:z3impliesramsey} below). It is not clear yet that this number 3 can be further improved: one of the main questions motivating the work on this paper is~\cite[Question 5.1]{MR3376354} whether every $\mathbb{Z}$-Ramsey ultrafilter must be selective. In this paper we provide a partial answer to this question, given by a theorem that is reminiscent to several of the results mentioned in this paragraph: Theorem~\ref{z-ramsey+ppointimpliesselective} below establishes that every $\mathbb Z$-Ramsey ultrafilter that is also a P-point must be selective.

Another related question, mentioned in~\cite[p. 458]{MR3681104} (see the short paragraph right after~\cite[Corollary 2.7]{MR3681104}) is whether every P-point in $\mathbb{Z}^{*}$ is a $\mathbb{Z}$-Ramsey ultrafilter. We also address this question in the present paper, answering it in the negative by exhibiting a family of forcing notions (generated by certain $F_\sigma$-ideals) with the property that they provide a generic extension containing P-points (moreover, P-points all of whose elements are positive with respect to the relevant ideal) that are not $\mathbb Z$-Ramsey ultrafilters. In the process of developing this proof, we provide a negative Ramsey-theoretic result by showing that there exist certain sets carrying $\mathbb Z$-invariant colourings without ``large'' monochromatic subsets (with ``large'' interpreted as positive with respect to the ideal at hand). In the end, although not directly relevant to the ultrafilter questions, we complement the negative Ramsey-theoretic result just mentioned with a positive one, stating that for each $\mathbb Z$-invariant colouring of $[\mathbb Z]^2$, it is possible to find a ``large'' (in a sense that will be made precise) monochromatic set, see Theorem~\ref{thm:positivemonochromatic} below.

In Section 2, we summarize some preliminary results and definitions that will be used throughout the paper. In Section 3, we proceed to study some ideals, in particular what we call $F_\sigma$ similarity invariant ideals and, for some particular cases of these ideals, we prove that they do not satisfy the corresponding version of Ramsey's theorem; as a corollary of this, we conclude that it is possible to force the existence of P-points that are not $\mathbb Z$-Ramsey ultrafilters. In section 4 we establish that the existence of $\mathbb Z$-Ramsey ultrafilters is independent from $\ZFC$, and, in Section 5, we show that $\mathbb Z$-Ramsey ultrafilters that are also P-points must be selective. Finally, Section 6 contains a Ramsey-theoretic statement about the ideal generated by $1$-thin sets, and states a related open question.

\section{Preliminaries}

Let us recall the basic notions we will use throughout the paper.

\medskip

A non-empty family $\mathcal{I}$ of subsets of a set $X$ is an {\bf ideal} on $X$ if it is closed under taking subsets and finite unions and does not contain the set $X$. In this paper we assume that all ideals on $X$ contain all finite subsets of $X$. We say that an ideal $\mathcal{I}$ on $X$ is {\bf tall} if for each $Y \in [X]^{\omega}$ 
there exists $I \in \mathcal{I}$ such that $I \cap Y$ is infinite. Given an ideal $\mathcal{I}$ on a set $X$, we denote by $\mathcal{I}^{+}$ the family of $\mathcal{I}$-positive sets, that is, subsets of $X$ which are not in $\mathcal{I}$. If $\mathcal{I}$ is an ideal on $X$ and $Y \in \mathcal{I}^{+}$, we denote by $\restr{\mathcal{I}}{Y}$ the ideal $\{I \cap Y \mid  I \in \mathcal{I}\}$ on $Y$. When $X$ is countable, an ideal $\mathcal{I}$ on $X$ can be regarded as an ideal on $\omega$ via any bijection between $X$ and $\omega$, so we will assume our ideals to be ideals on $\omega$.

We consider $\mathcal{P}(\omega)$ equipped with the natural topology induced by identifying each subset of $\omega$ with its characteristic function, where $2^{\omega}$ is given the product topology. We call an ideal $\mathcal{I}$ a Borel (analytic, co-analytic,\dots) ideal on $\omega$ if $\mathcal{I}$ is an ideal on $\omega$ and $\mathcal{I}$ is Borel (analytic, co-analytic,\dots) in this topology.


There is an extremely close and useful connection between $F_{\sigma}$ ideals and lower semicontinuous submeasures. A {\bf submeasure on} a set $X$ is a function $\varphi \colon \mathcal{P}(X) \to [0, \infty]$ satisfying $\varphi(\emptyset)=0$, if $A \subseteq B$ then $\varphi(A) \leqslant \varphi(B)$, and $\varphi( A\cup B) \leqslant \varphi(A) + \varphi(B)$. To avoid trivialities, we also require that $\varphi(F) < \infty$ for all finite subset $F$ of $X$. If $\varphi$ is a submeasure on $\omega$ and satisfies $\varphi(A)=\lim_{n \to \infty}\varphi(A \cap n)$ for all $A \subseteq \omega$ then $\varphi$ is called a {\bf lower semicontinuous submeasure}, abbreviated by lscsm. To each lscsm $\varphi$ on $\omega$ naturally correspond the ideal $\mathsf{Fin}(\varphi)=\{A \subseteq \omega \mid  \varphi(A) < \infty\}$. It is immediate from the definition that $\mathsf{Fin}(\varphi)$ is an $F_{\sigma}$ ideal. The following fundamental theorem of Mazur is key to the study of $F_{\sigma}$-ideals.

\begin{theorem}[\cite{MR1124539}]\label{Thm:Mazur}
   Let $\mathcal{I}$ be an ideal on $\omega$. Then $\mathcal{I}$ is an $F_{\sigma}$ ideal if and only if there is a lscsm $\varphi$ such that $\mathcal{I}=\mathsf{Fin}(\varphi)$.\hfill $\square$
\end{theorem}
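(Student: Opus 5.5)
The easy direction is the one from a submeasure to the ideal, and it is immediate. Given a lscsm $\varphi$, the family $\mathsf{Fin}(\varphi)$ is closed under subsets by monotonicity and under finite unions by subadditivity. It contains all finite sets because we required $\varphi(F)<\infty$ for finite $F$. (Implicitly we also need $\varphi(\omega)=\infty$, so that $\omega\notin\mathsf{Fin}(\varphi)$.) Lower semicontinuity gives
\[
\{A \mid \varphi(A)\leqslant n\}=\bigcap_m\{A \mid \varphi(A\cap m)\leqslant n\}.
\]
Each set on the right depends only on $A\cap m$, so it is clopen, and the intersection is closed. Hence $\mathsf{Fin}(\varphi)=\bigcup_n\{A \mid \varphi(A)\leqslant n\}$ is $F_\sigma$.

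For the converse, suppose $\mathcal I=\bigcup_n K_n$ with each $K_n$ closed. I will first normalize the sequence. Replace $K_n$ by the hereditary closure $\{B \mid \exists A\in K_n,\ B\subseteq A\}$; this is still closed, being the image of the compact set $\{(A,B) \mid A\in K_n,\ B\subseteq A\}$ under projection. Then arrange that the sequence is increasing and satisfies $K_n\cup K_n\subseteq K_{n+1}$, meaning $A\cup B\in K_{n+1}$ whenever $A,B\in K_n$. To do this, set inductively $K'_{n+1}=K_{n+1}\cup K'_n\cup\{A\cup B \mid A,B\in K'_n\}\cup\{\{m\} \mid m\leqslant n\}$. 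The union map is continuous on the compact set $K'_n\times K'_n$, so $K'_{n+1}$ is closed. It stays hereditary, and it is contained in $\mathcal I$ because $\mathcal I$ is an ideal. Adding the singletons ensures every finite set eventually lies in some $K'_n$.

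With this normalization, define for $A\subseteq\omega$
\[
\psi(A)=\min\{n \mid A\in K_n\}, \qquad \psi(A)=\infty \text{ if } A\notin\mathcal I,
\]
and let $\varphi(A)=\sup_m \psi_0(A\cap m)$, where $\psi_0$ is a subadditive modification of $\psi$. The natural candidate is to take the grading $2^{n}$ instead of $n$ and then
\[
\varphi(A)=\inf\Bigl\{\textstyle\sum_i 2^{\psi(A_i)} \Bigm| A\subseteq \bigcup_i A_i \text{ finite cover}\Bigr\}
\]
for finite $A$, extended to all $A$ by $\varphi(A)=\sup_m\varphi(A\cap m)$. Monotonicity and subadditivity on finite sets are clear from the cover definition, and they pass to the supremum. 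Lower semicontinuity holds by construction, and finite sets get finite values because of the singletons.

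The key verification, and the main obstacle, is that $\mathsf{Fin}(\varphi)=\mathcal I$. First take $A\in K_n$. Then every $A\cap m$ lies in $K_n$ by heredity, so $\varphi(A\cap m)\leqslant 2^n$ and hence $\varphi(A)\leqslant 2^n$. Conversely, suppose $\varphi(A)\leqslant 2^n$. I would like to show that each $A\cap m$ lies in $K_{n+c}$ for a fixed constant $c$. Given a cover with $\sum_i 2^{\psi(A_i)}\leqslant 2^{n}+1$, group the pieces greedily, like binary addition: two pieces in $K_j$ merge into one in $K_{j+1}$, and repeating this collapses the cover into a single set in $K_{n+1}$. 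Here it matters that the $K_j$ are increasing and $K_j\cup K_j\subseteq K_{j+1}$; this is why the exponential grading is used. Then, since $K_{n+1}$ is closed and hereditary and every finite initial segment $A\cap m$ lies in it, we get $A\in K_{n+1}\subseteq\mathcal I$. This limiting step is exactly where closedness of the $K_n$ is used. Checking the binary merging argument carefully, including its off-by-one constants, is the delicate part.
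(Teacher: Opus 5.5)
Your proof is correct, but it takes a more roundabout route than the standard one. The paper only cites Mazur here. However, right after showing that $\mathcal{I}_{\kthin}$ is $F_\sigma$, it writes down its lscsm by the standard construction: iterate pairwise unions of a closed hereditary family, $F_n=\{A\cup B\mid A,B\in F_{n-1}\}$, and grade linearly, $\varphi(E)=\min\{n+1\mid E\in F_n\}$.

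After your normalization (closed, hereditary, increasing $K_n$ with $K_n\cup K_n\subseteq K_{n+1}$), that linear grading is already subadditive. If $A\in K_i$ and $B\in K_j$ with $i\leqslant j$, then $A\cup B\in K_{j+1}$, so
$\psi(A\cup B)+1\leqslant\max(\psi(A),\psi(B))+2\leqslant(\psi(A)+1)+(\psi(B)+1)$.
So you can simply set $\varphi(A)=\psi(A)+1$ for nonempty $A$ and $\varphi(\emptyset)=0$, directly on all of $\mathcal P(\omega)$. Monotonicity comes from heredity, and lower semicontinuity from closedness of the $K_n$, which is exactly your final limiting step. The equality $\mathsf{Fin}(\varphi)=\bigcup_n K_n=\mathcal I$ is then immediate, with no covers, exponential weights or merging.

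Your cover-infimum with weights $2^{\psi}$ is the general subadditive-envelope device, and the step you flag as delicate does go through:
\begin{enumerate}[(i)]
\item the infimum is attained, since the sums are integers;
\item merging two pieces of equal level preserves the weight sum;
\item a sum of distinct powers of $2$ bounded by $2^n+1$ then collapses into a single set in $K_{n+1}$.
\end{enumerate}
What the envelope buys is that subadditivity is automatic whatever grading you start from. But identifying $\mathsf{Fin}(\varphi)$ with $\mathcal I$ still needs the merging property $K_n\cup K_n\subseteq K_{n+1}$. Once you have that property, the linear grading is subadditive on its own, so nothing is gained here.

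One small repair: you never define $K'_0$. Also, adding $\{\{m\}\mid m\leqslant n\}$ does not preserve heredity in the degenerate case where the early $K_n$ are empty, because $\emptyset$ is missing; add $\emptyset$ as well. Given the paper's standing convention that ideals contain all finite sets, the singletons are in fact redundant.
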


A useful characterization of selective ultrafilters is due to Mathias:

\begin{theorem}[\cite{MR491197}]\label{T:Mathias-Selective}
    Let $u$ be an ultrafilter on $\omega$. Then, $u$ is selective if and only if $u \cap \mathcal{I} \ne \emptyset$ for every analytic tall ideal $\mathcal{I}$ on $\omega$.\hfill $\square$
\end{theorem}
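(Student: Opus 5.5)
We prove the two implications separately. The direction "meets every analytic tall ideal $\Rightarrow$ selective" only needs one concrete $F_\sigma$ tall ideal attached to a given partition. The converse rests on an infinite-dimensional Ramsey property of selective ultrafilters for analytic sets, which is where the real work lies.

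\textbf{($\Leftarrow$)} Suppose $u$ meets every analytic tall ideal, and let $\{A_n \mid n\in\omega\}$ be a partition of $\omega$ with no piece in $u$; we need a selector in $u$.
\begin{enumerate}[(a)]
\item If the sizes $|A_n|$ are bounded by some $k$, enumerate each piece as $A_n=\{a_n^1,\dots,a_n^{k_n}\}$ with $k_n\leqslant k$. Then $S_i=\{a_n^i\mid n\in\omega,\ i\leqslant k_n\}$, $i=1,\dots,k$, are selectors covering $\omega$. Hence some $S_i$ lies in $u$, and it extends to a full selector that still lies in $u$.
\item Otherwise, let
\[
\mathcal I=\{X\subseteq\omega \mid \exists N,k\ \forall n>N\ (|X\cap A_n|\leqslant k)\}.
\]
This is an ideal, and $\omega\notin\mathcal I$ because the piece sizes are unbounded. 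It is $F_\sigma$, being a countable union over $(N,k)$ of closed sets.
\item $\mathcal I$ is tall. Given infinite $Y$, either some $Y\cap A_n$ is infinite, and then it lies in $\mathcal I$. Or $Y$ meets infinitely many pieces, and then a choice of one point of $Y$ in each of them is an infinite member of $\mathcal I$.
\item So there is $X\in u\cap\mathcal I$, with witnesses $N,k$. Since $A_0\cup\dots\cup A_N\notin u$, we may shrink $X$ so that it avoids these pieces. Then $X$ meets every piece in at most $k$ points, so it splits into $k$ partial selectors. One of them is in $u$, and extending it gives a selector in $u$.
\end{enumerate}

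\textbf{($\Rightarrow$)} Let $u$ be selective and $\mathcal I$ an analytic tall ideal, and suppose toward a contradiction that $u\cap\mathcal I=\emptyset$. The key ingredient is Mathias' $u$-Ramsey theorem: for selective $u$, every analytic $\mathcal S\subseteq[\omega]^\omega$ admits $A\in u$ with either $[A]^\omega\subseteq\mathcal S$ or $[A]^\omega\cap\mathcal S=\emptyset$. We apply it to $\mathcal S=\mathcal I\cap[\omega]^\omega$, which is analytic, and obtain $A\in u$.
\begin{enumerate}[(a)]
\item If $[A]^\omega\subseteq\mathcal S$, then $A\in\mathcal I\cap u$, a contradiction.
\item If $[A]^\omega\cap\mathcal S=\emptyset$, this contradicts tallness: some $I\in\mathcal I$ has $I\cap A$ infinite, and $I\cap A\in[A]^\omega\cap\mathcal S$.
\end{enumerate}

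The main obstacle is proving the $u$-Ramsey property for analytic sets. My plan follows the Galvin–Prikry/Ellentuck pattern with all reservoirs restricted to $u$.
\begin{enumerate}[(a)]
\item First handle open sets and then $u$-Baire sets via combinatorial forcing ("$A$ accepts/rejects $s$"). Here selectivity is used through diagonal intersections: $u$ being a P-point and selective lets us fuse countably many choices $A_s\in u$ into a single $A\in u$ with $A\setminus(\max s+1)\subseteq A_s$ for all $s\subseteq A$.
\item For analytic sets, show that they are $u$-Baire. Either write them as Souslin-operation images of closed sets and show the $u$-Ramsey sets are closed under the Souslin operation, by a $u$-version of the Ellentuck topology argument. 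Or argue via Mathias forcing $\mathbb M_u$, whose generic real is Mathias over the ground model, and use absoluteness of analytic statements.
\end{enumerate}
I would try the Ellentuck-topology route first, since it stays within $\ZFC$ combinatorics.
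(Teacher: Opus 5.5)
The paper gives no proof of this statement. It quotes it from Mathias's \emph{Happy families} and uses it as a black box. Your reduction is the standard one: $(\Rightarrow)$ comes from Mathias's theorem that analytic sets are $u$-Ramsey for selective $u$, applied to $\mathcal I\cap[\omega]^\omega$. $(\Leftarrow)$ uses the ideal generated by the pieces and the partial selectors of the partition, which is essentially the eventually different ideal $\mathcal{ED}$ transported to the partition. Both directions are correct up to one slip.

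In $(\Leftarrow)$(b), $\omega\notin\mathcal I$ does not follow from $\sup_n|A_n|=\infty$ once infinite pieces are allowed, and you do allow them in (c). If $A_0$ is infinite and every other $A_n$ is a singleton, then $\omega\in\mathcal I$ with witnesses $N=0$, $k=1$. So $\mathcal I$ is not an ideal, and the hypothesis cannot be applied to it. In general, properness of $\mathcal I$ is equivalent to $\limsup_n|A_n|=\infty$. The repair is free. If $\omega\in\mathcal I$, run your step (d) with $X=\omega$: the finitely many exceptional pieces have union outside $u$, and the rest splits into $k$ partial selectors. Otherwise your argument applies verbatim. This also makes case (a) superfluous.

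Two remarks on the deferred $u$-Ramsey theorem, should you carry out the sketch.

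\begin{enumerate}
\item The Souslin-operation step must be run on the localized notion. That is: for every finite $s$ and $A\in u$ there is $B\in u$, $B\subseteq A$, such that $[s,B]$ is contained in or disjoint from $\mathcal S$. Here $[s,B]$ is the set of infinite $x$ having $s$ as an initial segment with $x\setminus s\subseteq B$. Equivalently, you need the $u$-Baire property in the $u$-Ellentuck topology. The unlocalized form you state is not even evidently closed under countable unions.
\item In the forcing route, knowing that an $\mathbb M_u$-generic real is Mathias generic over $V$ does not by itself give a homogeneous set in $V$. The argument actually runs as follows. Take a countable $M\prec H_\theta$ containing $u$ and a code for $\mathcal S$. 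Use the Prikry property to decide $\dot x\in\mathcal S$ by a pure condition $(\emptyset,A)$. Use selectivity to find $B\in u$, $B\subseteq A$, all of whose infinite subsets $y$ are $\mathbb M_u$-generic over $M$. Then apply Mostowski absoluteness of $\Sigma^1_1$ statements between $M[y]$ and $V$.
\end{enumerate}
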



Various types of ultrafilters can be added generically by the quotient algebra $\mathcal{P}(\omega)/\mathcal{I}$, where $\mathcal{I}$ is some definable ideal (usually one such that $\mathcal{P}(\omega)/\mathcal{I}$ does not add reals; for example, $F_{\sigma}$-ideals). Indeed, using Theorem \ref{Thm:Mazur}, it is easy to prove the following result, which was first observed in \cite{MR748847}:

\begin{remark}[\cite{MR748847}]\label{Remark:Just-Krawczyk}
    If $\mathcal{I}$ is an $F_{\sigma}$ ideal, then $\mathcal{P}(\omega)/\mathcal{I}$ is $\sigma$-closed; in fact, $\mathcal{I}$ is a $\text{P}^{+}$-ideal.\hfill $\square$
\end{remark}

An ideal $\mathcal{I}$ on $\omega$ is a {\bf $\text{P}^{+}$-ideal} if for every decreasing sequence $\{X_{n} \mid  n < \omega\}$ of $\mathcal{I}$-positive sets there is an $\mathcal{I}$-positive set $X$ such that $X \subseteq^{*} X_{n}$, for all $n < \omega$.

The aforementioned quotient algebra provides a straightforward method for the consistent construction of various types of ultrafilters \cite{MR2861027}. Instead of dealing with the quotient algebra $\mathcal{P}(\omega)/\mathcal{I}$, it is common to use the non-separative equivalent forcing notion $(\mathcal{I}^{+}, \subseteq)$.

Following \cite{MR3692233}, we will be interested in the following $G$-invariant Ramsey type properties of ideals over a group $G$: Let $\mathcal{I}$ be an ideal on a countably infinite group $G$. We say that $\mathcal{I}$ satisfies
\[
  G \overset{\text{left-inv.}}{\xrightarrow{\hspace{1.1cm}}} (\mathcal{I}^{+})^{2}_{2}
\]
if for every colouring $c \colon [G]^{2} \longrightarrow 2$ that is invariant under the left multiplication action of $G$ (that is, with the property that $c(\{x_{1},x_{2}\})=c(\{gx_{1},gx_{2}\})$ for every $g \in G$), there is an $\mathcal{I}$-positive set $X$ homogeneous with respect to $c$. We say that $\mathcal{I}$ satisfies
\[
  \mathcal{I}^{+} \overset{\text{left-inv.}}{\xrightarrow{\hspace{1.1cm}}} (\mathcal{I}^{+})^{2}_{2}
\]
if for every $\mathcal{I}$-positive set $X$ and every left-invariant colouring $c \colon [G]^{2} \longrightarrow 2$ there is an $\mathcal{I}$-positive subset $Y\subseteq X$ homogeneous with respect to $c$. Thus, we shall call an ideal $\mathcal{I}$ {\bf left-invariant Ramsey$(G)$}, if 
$G \overset{G\text{-inv.}}{\xrightarrow{\hspace{1.1cm}}} (\mathcal{I}^{+})^{2}_{2}$, and we shall say that $\mathcal{I}$ is {\bf left-invariant Ramsey} if $\mathcal{I}^{+} \overset{G\text{-inv.}}{\xrightarrow{\hspace{1.1cm}}} (\mathcal{I}^{+})^{2}_{2}$. Let $X$ be an $\mathcal{I}$-positive set. A partition $\{F_{n} \mid  n < \omega\}$ of $X$ into finite sets is called {\bf thin} if, for every $n > 0$ and $F=\bigcup_{k<n}F_{k}$, the following conditions are satisfied: $D(F_{n},F_{n}) \cap D(F, F) = \emptyset$, $D(F_{n},F_{n}) \cap D(F_{n},F) = \emptyset$, and $D(F_{n}, F) \cap D(F, F) = \emptyset$, where $D$ is defined as follows.

\begin{definition}
    Given a group $G$ and subsets $A,B\subseteq G$, we denote $D(A,B)=\{y^{-1}x \mid x \in A, \, y \in B \text{ and } x \ne y\}$. When $A=B$, we simply write $D(A)$.
\end{definition}

The following definition delineates the main object of study in this paper.

\begin{definition}
Given an $n\in\mathbb N$, an ultrafilter $u\in\beta\mathbb Z$ is called {\bf $(\mathbb Z,n)$-Ramsey} if, for every colouring $c:[\mathbb Z]^n\longrightarrow 2$ that is $\mathbb Z$-invariant, there exists an $A\in u$ such that $[A]^n$ is $c$-monochromatic. In case $n=2$, we will simply say that $u$ is a {\bf $\mathbb Z$-Ramsey ultrafilter}.
\end{definition}

Note that every ultrafilter is $(\mathbb Z,1)$-Ramsey. By the usual arguments involving the dropping of one coordinate of a tuple, every $(\mathbb Z,n+1)$-Ramsey ultrafilter is seen to be a $(\mathbb Z,n)$-Ramsey ultrafilter. Petrenko and Protasov in~\cite{MR3681104} proved that every $(\mathbb Z,4)$-Ramsey ultrafilter must be selective.


\section{A generic P-point not \texorpdfstring{$\mathbb{Z}$}{Lg}-Ramsey}

In this section, we develop the necessary tools to negatively answer the question mentioned in~\cite[p. 458]{MR3681104}, showing that for certain $F_{\sigma}$-ideals $\mathcal{I}$ on $\mathbb{Z}$, the quotient algebra $\mathcal{P}(\mathbb{Z})/\mathcal{I}$ generically adds an ultrafilter $u$ which is a P-point but not $\mathbb{Z}$-Ramsey.

Recall that a subset $A$ of a countably infinite group $G$ is $k\textbf{-thin}$ ($k \geq 1$) if 
\[
  |gA \cap A|\leq k
\]
for each $g \in G\setminus \{e_{G}\}$ (see \cite{MR3681104, MR1834643}). In this context, we define the natural ideals generated by such sets as follows:
\[
\mathcal{I}_{\kthin}=\left\{I \subset G \mid  
\left(\exists \{A_{i}\mid  i < n\} \subset \kthin\right)
\left( I \subseteq \bigcup_{i<n}A_{i}\right)\right\},
\]
where $\kthin = \{A \subset G \mid  A \text{ is a $k$-thin set}\}$. The family $\kthin$ is hereditary; that is, for each $A \in \kthin$ and each $B \subseteq A$, it follows that $B \in \kthin$. Also, note that
\[
\mathcal{I}_{\kthin} \subseteq \mathcal{I}_{\text{$(k+1)$-$\mathsf{thin}$}}
\]
for every $k \geq 1$.

\begin{lemma} Let $G$ be a countably infinite group. Then $\mathcal{I}_{\kthin}$ is a tall $F_{\sigma}$ ideal. 
\end{lemma}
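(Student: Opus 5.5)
The plan is to verify the three required properties separately: that $\mathcal{I}_{\kthin}$ is a proper ideal containing the finite sets, that it is $F_\sigma$, and that it is tall. Closure under subsets and finite unions is immediate from the definition, because the family $\kthin$ is hereditary and finite unions of finite unions are finite unions. Every singleton is $k$-thin, since $|g\{x\}\cap\{x\}|=0$ for $g\neq e_G$; hence every finite set lies in $\mathcal{I}_{\kthin}$.

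For properness, I will show that $G$ is not a finite union of $k$-thin sets. Suppose $G=A_0\cup\dots\cup A_{n-1}$ with each $A_i$ $k$-thin. Take a finite set $F\subseteq G$ with $|F|=N$, where $N$ is large. Some $A_i$ contains at least $N/n$ points of $F$. Counting pairs $(x,y)$ with $x\neq y$ in $A_i\cap F$, each quotient $yx^{-1}$ lies in the finite set $FF^{-1}$, which has at most $N^2$ elements. The quotient $g=yx^{-1}$ is realised by at least $(N/n)^2/N^2-o(1)$ pairs per element on average, which gives no contradiction yet.

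A cleaner route is to choose $F$ to be "$\mathbb{Z}$-like". Pick $g\neq e_G$.
\begin{itemize}
\item If $g$ has infinite order, let $F=\{g^0,\dots,g^{N}\}$. Then every quotient $g^{j}$ with $j\le N$ is realised about $N-j$ times inside each large subset. Applying Szemer\'edi's theorem is overkill; instead use pigeonhole on differences. A subset of size $\ge N/n$ of $\{0,\dots,N\}$ has at most $N^2$ pairs but only $N$ possible differences, so some difference $j\neq 0$ occurs at least $(N/n)^2/(2N)\approx N/(2n^2)>k$ times. This contradicts $k$-thinness of $A_i$, via $|g^jA_i\cap A_i|>k$.
\item If every element has finite order, run the same count with a finite subset $F$ and right-translates. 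For pairs in $A_i\cap F$ the quotients $yx^{-1}$ range over $FF^{-1}$; this needs $|FF^{-1}|$ small relative to $|F|^2$, which fails in general. So instead fix $g$ of finite order $m$ and use cosets.
\end{itemize}

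The point in the last case is that a general periodic group may have no small-doubling finite sets, so I expect this step to be the main obstacle. My intended fix is a Ramsey-type argument rather than counting. Build $x_0,x_1,\dots$ inductively, each new point chosen outside the finitely many translates generated so far, so that all quotients $x_jx_i^{-1}$ are distinct. Colour each point by which $A_i$ contains it, and use the infinite pigeonhole principle together with the freedom to choose quotients to repeat: pick $h\neq e$ and consider the infinite set $\{x,hx,\dots\}$-chains. Concretely, for any infinite $B$ and any $h$, the sets $B\cup hB\cup\dots\cup h^{k}B$ force some $A_i$ to contain many pairs $(y,hy)$. Take $k n+1$ points $y_j$ with the sets $\{y_j,hy_j\}$ pairwise disjoint. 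By pigeonhole on the colour pair $(i,i')$, it then suffices to handle the diagonal case $i=i'$. Take $n^2(k+1)$ such pairs, so that $k+1$ of them share a colour pair. To force $i=i'$, iterate the construction along $\{y,hy,h^2y,\dots,h^{n}y\}$: within $n+1$ consecutive points two lie in the same $A_i$, say $h^ay$ and $h^by$. Pigeonholing over the finitely many triples $(i,a,b)$ and choosing $y$ from infinitely many disjoint orbit-segments yields $k+1$ points $z$ with $z,h^{b-a}z\in A_i$, contradicting $k$-thinness. This argument also works when $h$ has finite order $m>n$; if all orders are small, use segments of distinct elements instead, since $G$ is infinite.

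\textbf{$F_\sigma$.} For fixed $n$, the set $\mathcal{C}_n$ of subsets that are unions of $n$ $k$-thin sets is closed. Indeed, $I\in\mathcal{C}_n$ iff there is a colouring $I\to n$ with each class $k$-thin. Each finite stage of this condition is closed, and by compactness of $n^{G}$ (K\"onig's lemma) a limit of good colourings is good. Hence $\mathcal{I}_{\kthin}=\bigcup_n\mathcal{C}_n$ is $F_\sigma$.

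\textbf{Tall.} Given an infinite $Y$, choose inductively $y_0,y_1,\dots\in Y$ so that $y_{j}\notin\{ab^{-1}c \mid a,b,c\in\{y_0,\dots,y_{j-1}\}\}$ (a finite set). Then $\{y_j\}$ is $1$-thin: if $gy_a=y_b$ and $gy_c=y_d$ with $(a,b)\neq(c,d)$, the largest index among them would be expressed as a product of the others. The case where the largest index appears twice gives $g=e$, which is excluded.
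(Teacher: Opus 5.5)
There is a genuine gap in the tallness part. Your sentence ``the case where the largest index appears twice gives $g=e$'' is false.

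\begin{itemize}
\item If $gy_a=y_b$ and $gy_b=y_d$ with $b$ the largest index, you only get $y_by_a^{-1}y_b=y_d$, and your recursion does not exclude this.
\item If $gy_a=y_b$ and $gy_b=y_a$, you only get $g^2=e$.
\end{itemize}

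Concretely, in $\mathbb{Z}$ your recursion permits $y_0=0$, $y_1=10$, $y_2=5$, since $5\notin\{0,10,-10,20\}$. Yet $\{5,10\}\subseteq(5+Y)\cap Y$, so $Y$ is not $1$-thin.

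In fact, for $k=1$ the statement fails for general groups. In $\bigoplus_{\omega}\mathbb{Z}_2$, any distinct $a,b\in A$ give $g=a+b\neq 0$ with $\{a,b\}\subseteq(g+A)\cap A$. So every $1$-thin set has at most one element, and $\mathcal{I}_{\onethin}$ is just the ideal of finite sets, which is not tall. The paper's proof has the same defect: its assertion that $i,j,k,l$ are pairwise distinct overlooks the possibilities $i=l$ or $j=k$.

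Your recursion does work for $k\geq 2$ in every group. Three points of $gY\cap Y$ give three edges $k_r\to i_r$ (where $y_{i_r}=gy_{k_r}$) of a fixed-point-free partial injection on indices. Any three such edges contain one of two configurations:
\begin{itemize}
\item two vertex-disjoint edges, which your four-distinct-indices argument handles; or
\item a directed $3$-cycle $gy_a=y_b$, $gy_b=y_c$, $gy_c=y_a$, where the largest index again has the excluded form (e.g.\ $y_c=y_by_a^{-1}y_b$ if $c$ is largest).
\end{itemize}

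For $k=1$ and $G=\mathbb{Z}$, which is the case the paper actually uses, the $2$-cycle is impossible by torsion-freeness. It then suffices to add the requirement $2y_n\neq y_i+y_j$ for $i,j<n$, which prevents $y_n$ from being the midpoint of a three-term progression.

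The rest of your proposal is fine.
\begin{itemize}
\item The $F_\sigma$ part is the paper's compactness argument in another form; the paper shows $\kthin$ is closed and uses continuity of the union map.
\item Your properness argument, which the paper omits, is correct in its final form, but it simplifies considerably. If $G=A_0\cup\dots\cup A_{n-1}$, fix distinct $g_0,\dots,g_n$. For each $y$, two of $g_0y,\dots,g_ny$ share some $A_i$, and pigeonholing the triple $(i,a,b)$ over infinitely many $y$ gives $|g_bg_a^{-1}A_i\cap A_i|=\infty$.
\end{itemize}
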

\begin{proof}
To see that $\mathcal{I}_{\kthin}$ is tall, it suffices to prove that $\mathcal I_{1\text{-}\mathsf{thin}}$ is tall. Let $X\subseteq G$ be an infinite set. Recursively choose, for each $n<\omega$, 
\begin{equation*}
    y_{n}\in X\setminus\left(\{y_i y_j^{-1} y_k \mid   i,j,k< n\}\cup\{y_i\mid  i< n\}\right),
\end{equation*}
and let $Y=\{y_n\mid  n<\omega\}$. By construction, $Y\subseteq X$ is an infinite set; we claim that this set is $1$-thin. To see this, let $g\in G\setminus\{e_{G}\}$ be arbitrary and assume, to get a contradiction, that $|gY\cap Y|\geq 2$. This means there are two distinct $i,j$ such that $y_i,y_j\in gY$, so there are $l,k$ such that $y_i=gy_k$ and $y_j=gy_l$. Note that this necessarily implies that $i,j,k,l$ are pairwise distinct, and $g=y_i y_k^{-1}=y_j y_l^{-1}$. By taking inverses on both sides of the last equation if necessary, we may assume that $i=\max\{i,j,k,l\}$, and from here we may deduce $y_i=y_j y_l^{-1} y_k$, contrary to the recursive definition of the $y_n$.

On the other hand, as the function $\cup \colon \mathcal{P}(G) \to \mathcal{P}(G)$ given by 
\[
  \cup(A, B) = A \cup B
\]
is continuous and the family $\kthin$ is hereditary, to check that $\mathcal{I}_{\kthin}$ is $F_{\sigma}$, it suffices to show that  $\kthin$ is closed in $\mathcal{P}(G)$. Take any $A \in \mathcal{P}(G)\setminus \kthin$. Then there exists a $g\in G \setminus \{e_{G}\}$ such that $\lvert gA\cap A\rvert>k$. Pick $a_0,a_1,\dots,a_k\in A$ such that  $ga_0,ga_1,\dots,ga_k\in A$. Observe that $U=\{X\in \mathcal{P}(G) \mid   \{a_i \mid   i\leq k\}\cup\{ga_i \mid   i\leq k\}\subseteq X\}$ is open, $U\cap \kthin=\emptyset$, and $A\in U$. Therefore, $\kthin$ is a closed set.
\end{proof}

Since the family $\text{$k$-$\mathsf{thin}$}$ is hereditary and closed in $\mathcal{P}(G)$, it follows from Theorem \ref{Thm:Mazur} that we can provide a witness lscsm, $\varphi_{\text{$k$-$\mathsf{thin}$}}$, showing that the ideal $\mathcal{I}_{\text{$k$-$\mathsf{thin}$}}$ is an $F_{\sigma}$ ideal. To this end, we define  $F_{0}=\text{$k$-$\mathsf{thin}$}$ and $F_{n}=\{A \cup B \mid  A, B \in F_{n-1}\}$ for 
$n \geq 1$. Thus, $F_{n} \subseteq F_{n+1}$ ($n<\omega$) and $\mathcal{I}_{\text{$k$-$\mathsf{thin}$}} = \bigcup_{n < \omega} F_{n}$. For any $E \in [G]^{<\omega}$, we set  
\[
\varphi_{\text{$k$-$\mathsf{thin}$}}(E)= \min \{n +1 \mid  E \in F_{n}\},
\]
and define $\varphi_{\text{$k$-$\mathsf{thin}$}}(A) :=\lim_{n \to \infty} \varphi_{\text{$k$-$\mathsf{thin}$}}(A \cap \{g_{i} \mid  i < n\})$ for any $A \in \mathcal{P}(G)$, where $\{g_{n} \mid  n < \omega\}$ is a fixed enumeration of $G$. Moreover, since $\text{$k$-$\mathsf{thin}$}$ is left-invariant (meaning $g A \in \text{$k$-$\mathsf{thin}$}$ for all $A \in \text{$k$-$\mathsf{thin}$}$ and $g \in G$), it follows that both $\varphi_{\text{$k$-$\mathsf{thin}$}}$ and the ideal $\mathcal{I}_{\text{$k$-$\mathsf{thin}$}}$ are left-invariant. 

\medskip

The $1$-thin sets play a crucial role in determining when a $\mathbb{Z}$-Ramsey ultrafilter is selective, as shown by the following result.

\begin{theorem}[\cite{MR3376354}, Theorem 2.6]\label{Zramsey+thinset-implica-ramsey}
Let $u$ be a $\mathbb Z$-Ramsey ultrafilter containing a $1$-thin set. Then, $u$ must be selective.\hfill $\square$
\end{theorem}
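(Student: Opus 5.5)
The plan is to reduce to the classical characterization of selective ultrafilters: a nonprincipal ultrafilter $u$ is selective if and only if for every colouring $c\colon[\mathbb Z]^2\longrightarrow 2$ there is a $c$-homogeneous set in $u$ (Kunen's Ramsey characterization). Throughout we take $u$ to be nonprincipal, as is implicit when speaking of selectivity. The idea is that on a $1$-thin set $A$, pairs are coded injectively by their differences. Because of this, \emph{every} colouring of $[A]^2$ extends to a $\mathbb Z$-invariant colouring of $[\mathbb Z]^2$, and the $\mathbb Z$-Ramsey property then yields homogeneous sets for arbitrary colourings.

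\textbf{Step 1: pairs in $A$ are determined by their difference.} Fix $A\in u$ that is $1$-thin, so $|(d+A)\cap A|\leq 1$ for every $d\neq 0$ (in additive notation). Let $d>0$ and suppose $\{a,b\}$ and $\{a',b'\}$ are pairs in $A$ with $a<b$, $a'<b'$ and $b-a=b'-a'=d$. Then $b,b'\in(d+A)\cap A$, so $b=b'$, and hence $a=a'$. Thus the map $\{a,b\}\mapsto |b-a|$ is injective on $[A]^2$.

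\textbf{Step 2: extend an arbitrary colouring invariantly.} Let $c\colon[\mathbb Z]^2\longrightarrow 2$ be arbitrary. Define $c'\colon[\mathbb Z]^2\longrightarrow 2$ as follows. Given $\{x,y\}$, put $d=|x-y|$. If there is a (by Step 1, unique) pair $\{a,b\}\in[A]^2$ with $|b-a|=d$, set $c'(\{x,y\})=c(\{a,b\})$; otherwise set $c'(\{x,y\})=0$.

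Since $c'(\{x,y\})$ depends only on $|x-y|$, the colouring $c'$ is $\mathbb Z$-invariant. Moreover, $c'$ agrees with $c$ on $[A]^2$: for $\{a,b\}\subseteq A$, the unique pair in $A$ with difference $|b-a|$ is $\{a,b\}$ itself.

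\textbf{Step 3: conclude.} Since $u$ is $\mathbb Z$-Ramsey, there is $B\in u$ with $[B]^2$ monochromatic for $c'$. Then $B\cap A\in u$, and on $[B\cap A]^2$ the colourings $c$ and $c'$ coincide, so $B\cap A$ is $c$-homogeneous. As $c$ was arbitrary, $u$ is a Ramsey (selective) ultrafilter.

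The only point needing care is Step 1, namely that $1$-thinness gives uniqueness of the pair for each positive difference. This is exactly what makes the extension in Step 2 well defined, and the rest is formal.
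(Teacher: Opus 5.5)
Your proof is correct: $1$-thinness gives $|(d+A)\cap A|\leq 1$, so the difference map is injective on $[A]^2$. Hence every colouring of $[A]^2$ is the restriction of a colouring that depends only on $|x-y|$, and is therefore $\mathbb Z$-invariant; Kunen's characterization of selective ultrafilters as Ramsey ultrafilters then finishes the argument. The paper gives no proof of its own here (the result is imported from Petrenko and Protasov and used as a black box, e.g.\ at the end of the proofs that $(\mathbb Z,3)$-Ramsey ultrafilters and $\mathbb Z$-Ramsey P-points are selective), so there is nothing to compare against; your argument is the standard one behind that cited result.
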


On the other hand, Mathias's result (Theorem \ref{T:Mathias-Selective}) tells us that containing a 1-thin set is, in fact, a necessary condition for a $\mathbb{Z}$-Ramsey ultrafilter to be selective. Thus, a $\mathbb{Z}$-Ramsey ultrafilter $u$ is selective if and only if $u \cap \mathcal{I}_{\onethin} \ne \emptyset$.

\begin{definition}\label{Def-SimInvIdeal}
    Let $G$ be a countably infinite group. An $F_{\sigma}$ ideal $\mathcal{I}$ on $G$ (witnessed by a lscsm $\varphi$) is called a {\bf similarity-invariant ideal} if the following conditions hold:
    \begin{enumerate}[(i)]
        \item $\varphi$ is left-invariant; that is, $\varphi(A)=\varphi(g A)$ for all $A \subseteq G$ and all $g \in G$.
        \item For every $F \in [G]^{<\omega}$ and every $n < \omega$, there exists $E \in [G]^{<\omega}$ such that $\varphi(E) \geq n$, $E \cap F = \emptyset$, $D(E,E) \cap D(F,F) = \emptyset$, $D(E,E) \cap D(E,F) = \emptyset$ and $D(E,F) \cap D(F,F) = \emptyset$.     
    \end{enumerate}
\end{definition}

Examples of similarity-invariant ideals relevant to our purposes are the van der Waerden ideal $\mathcal{W}$ and the $k$-thin ideal $\mathcal{I}_{\kthin}$ over the group of integers $\mathbb{Z}$. Recall that a set $A \subseteq \mathbb{Z}$ is called an \textbf{AP-set} if it contains arbitrarily long arithmetic progressions. The \textbf{van der Waerden ideal} is defined by
\[
  \mathcal{W} = \{A \subseteq \mathbb{Z} \mid A \text{ is not an AP-set}\}.
\]
It is well known that $\mathcal{W}$ is a tall $F_{\sigma}$-ideal on $\mathbb{Z}$. Note that $\mathcal{W}$ is an invariant ideal and, since each $A \in \kthin$ contains no arithmetic progressions of length $k+2$, it follows that $\mathcal{I}_{\kthin} \subseteq \mathcal{W}$, and hence $\mathcal{W}^{+} \subseteq \mathcal{I}^{+}_{\kthin}$.

\begin{lemma}\label{L: similarity-invariant}
    If $G$ is the group of integers $\mathbb{Z}$, then both $\mathcal{W}$ and $\mathcal{I}_{\kthin}$ are similarity-invariant ideals.
\end{lemma}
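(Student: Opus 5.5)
Two things need checking for each of the two ideals: a left-invariant lscsm witnessing the ideal (condition (i)), and the separation condition (ii).

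\emph{Condition (i).} For $\mathcal{I}_{\kthin}$ we take the lscsm $\varphi_{\kthin}$ built above. It is translation invariant because $\kthin$, and hence every $F_n$, is closed under translation. One point needs care: the definition takes a limit along a fixed enumeration. Since $\varphi$ is monotone and computed on finite sets, the limit equals $\sup\{\varphi(E) \mid E\in[A]^{<\omega}\}$. This expression is manifestly invariant under translation.

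For $\mathcal{W}$ we use
\[
\varphi_{\mathcal W}(A)=\sup\{\ell \mid A \text{ contains an arithmetic progression of length } \ell\}.
\]
This is not literally a submeasure. Instead we let $\psi(E)$ be the least number of AP-free-of-length-$m$ pieces needed, or more simply $\psi(A)=\sup_{E\in[A]^{<\omega}} \min\{n \mid E \text{ is a union of } n \text{ sets each without APs of length} \ldots\}$. This is awkward, so the cleaner route is Mazur-style. Write $\mathcal{W}=\bigcup_m C_m$ with $C_m=\{A \mid A \text{ has no AP of length } m\}$. Each $C_m$ is closed, hereditary and translation invariant. By van der Waerden's theorem, $C_m\cup C_m\subseteq C_{m'}$ for suitable $m'$, so we may define $\varphi(E)=\min\{m \mid E\in C_m\}$ on finite sets and extend by supremum over finite subsets. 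Subadditivity up to a constant is all that Mazur's construction needs. I would follow the same $F_n$-iteration used for $\kthin$, starting from $F_0=C_2$ or any hereditary, translation-invariant closed generating family. Then invariance follows exactly as before, and the sup formulation again gives independence from the enumeration.

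\emph{Condition (ii).} Here we work in $\mathbb{Z}$, so $D(A,B)=\{x-y \mid x\in A,\ y\in B,\ x\ne y\}$. Given finite $F$ and $n$, let $M=\max\{|x| \mid x\in F\}+1$. Take a finite set $E_0$ with $\varphi(E_0)\ge n$ and $E_0\subseteq[0,L]$. For $\mathcal W$, take an AP of length $n$. For $\kthin$, the tallness/positivity of $\mathbb{Z}$ gives such a finite set, since $\mathbb Z\notin\mathcal I_{\kthin}$ and lower semicontinuity applies. Now set $E=E_0+t$ with $t$ huge, say $t>10(L+M)$. Invariance gives $\varphi(E)\ge n$. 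The three difference sets then lie in disjoint ranges: $D(E,E)\subseteq[-L,L]$, $D(F,F)\subseteq[-2M,2M]$, and $D(E,F)\subseteq[t-L-M,\,t+L+M]$. So $D(E,F)$ misses both of the others, and $E\cap F=\emptyset$.

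\emph{Main obstacle.} This scaling argument fails for $D(E,E)\cap D(F,F)$, because small differences occur in both. The fix is to dilate rather than translate. We want $E_0$ to have all nonzero differences of absolute value $>2M$. For $\mathcal W$, take an AP of length $n$ with common difference $d>2M$. For $\kthin$, note that $\varphi_{\kthin}$ need not be dilation invariant. Instead, intersect with $d\mathbb Z$: since $\mathbb Z$ is a finite union of cosets of $d\mathbb Z$, some coset $d\mathbb Z+r$ is $\mathcal I_{\kthin}$-positive. By translation invariance, $d\mathbb Z$ itself is positive. So by lower semicontinuity there is a finite $E_0\subseteq d\mathbb Z$ with $\varphi(E_0)\ge n$. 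Then $D(E_0,E_0)\cap[-2M,2M]=\emptyset$, and translating by a large $t$ as above yields all conditions of Definition~\ref{Def-SimInvIdeal}(ii).
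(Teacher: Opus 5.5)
Your treatment of clause (ii) is correct, and it takes a different route from the paper. The paper chooses $a_F>\max\{|x|\mid x\in F\}$ and $d_F>\max\{a_F-x\mid x\in F\}$. It then takes $E$ to be a finite subset of $a_F+d_F\mathbb N$, a set that is positive for $\mathcal W$ and hence for $\mathcal I_{\kthin}\subseteq\mathcal W$. Now $D(E,E)\subseteq d_F\mathbb Z$ is separated from $D(F,F)$ by size. It is separated from $D(E,F)\subseteq\{a_F-x\mid x\in F\}+d_F\mathbb Z$ by divisibility, because $0<a_F-x<d_F$. No translation of $E$ is needed, so invariance of $\varphi$ plays no role in that argument. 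You instead pick $E_0$ inside a positive coset of $d\mathbb Z$ (your pigeonhole argument is fine), then push it far out by a translation, so that all three difference sets are separated by magnitude. That works, but it consumes clause (i): you need $\varphi(E_0+t)=\varphi(E_0)$.

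This is exactly where the proposal fails for $\mathcal W$. The function $E\mapsto\min\{m\mid E\in C_m\}$ is one more than the length of the longest progression in $E$, the function you had just rejected. It is not subadditive up to any constant. Let $W(2,m)$ denote the van der Waerden number. An AP of length $W(2,m)-1$ splits as $A\cup B$ with no $m$-term AP in either part, so $\varphi(A),\varphi(B)\le m$ while $\varphi(A\cup B)=W(2,m)$. Running the $F_n$-iteration from $F_0=C_2$ (sets with at most one point) only produces finite sets.

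What works is to use the whole sequence $C_m$, reindexed along van der Waerden numbers. Set $m_0=3$ and $m_{j+1}=W(2,m_j)$. Put $\varphi(\varnothing)=0$, $\varphi(A)=1+\min\{j\mid A\in C_{m_j}\}$ for nonempty $A\in\mathcal W$, and $\varphi(A)=\infty$ otherwise. The $C_m$ increase and satisfy $C_{m_j}\cup C_{m_j}\subseteq C_{m_{j+1}}$, so $\varphi(A\cup B)\le\max\{\varphi(A),\varphi(B)\}+1\le\varphi(A)+\varphi(B)$. Each $C_m$ is closed and hereditary, which gives lower semicontinuity and $\mathsf{Fin}(\varphi)=\mathcal W$. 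Each $C_m$ is translation invariant, which gives (i).

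Relatedly, ``take an AP of length $n$'' does not guarantee $\varphi(E_0)\ge n$ for an honest lscsm. Argue as you did for $\mathcal I_{\kthin}$, using $d\mathbb N\in\mathcal W^+$ and lower semicontinuity. The paper itself simply takes (i) for $\mathcal W$ for granted, which its divisibility argument for (ii) can afford.
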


\begin{proof}
    We only need to show that $\mathcal{W}$ satisfies clause (ii) from Definition \ref{Def-SimInvIdeal}.
    Let $F \in [\mathbb{Z}]^{<\omega}$ and $n < \omega$. Let $\varphi_{\mathcal{W}}$ be a lscsm witness that $\mathcal{W}$ is an $F_{\sigma}$-ideal. First, observe that $a + d \mathbb{N} = \{a + dx \mid x \in \mathbb{N}\} \in \mathcal{W}^{+}$ for all $a \in \mathcal{\mathbb{Z}}$ and $d \in \mathbb{N}$. Take $a_{F} > \max\{|x| \mid x \in F\}$ and $d_{F} > \max\{a_{F} - x \mid x \in F \}$. Since $a_{F} + d_{F} \mathbb{N} \in \mathcal{W}^{+}$, we can find a finite subset $E$ of $a_{F} + d_{F} \mathbb{N}$ with $\varphi_{\mathcal{W}}(E) \geq n$. Clearly, $E \cap F = \emptyset$. Since $a_{F}-x= (a_{F} -y) + (y -x) > y -x$ if $x, y \in F$ and $x < y$ and, $D(E, E) \subseteq \pm d_{F} \mathbb{N}$, it follows that $D(E,E) \cap D(F,F) = \emptyset$. To see that $D(E,E) \cap D(E,F) = \emptyset$, assume for contradiction that there exist $x \in F$  and 
    $y \in \mathbb{N}$ with  $a_{F} + d_{F}y \in E$ such that $(a_{F} + d_{F}y) - x =  d_{F}z$ for some $z \in \mathbb{N}$. Then $a_{F} - x = d_{F} (z-y)$; this implies $d_{F}$ is a divisor of $a_{F}-x$.  In particular, $d_{F} \leq a_{F}-x$, which contradicts $d_{F} > \max\{a_{F} - x \mid x \in F\}$. Finally, as $z -x < (z-x) + (a_{F} -z) + d_{F}y = (a_{F} + d_{F}y) -x$ if $x, z \in F$ with $x < z$ and $y \in \mathbb{N}$, we conclude that $D(E,F) \cap D(F,F) = \emptyset$.
\end{proof}

\begin{lemma}\label{L: invariant-partition}
    Let $\mathcal{I}$ be an $F_{\sigma}$ similarity-invariant ideal on a countably infinite group $G$. There exists an $\mathcal{I}$-positive set $X$, a thin partition $\{F_{n} \mid  n < \omega\}$ of $X$ consisting of finite sets and a left-invariant colouring $c \colon [G]^{2} \longrightarrow 2$ such that every $c$-homogeneous subset $Y$ of $X$ is either finite or a partial selector of $\{F_{n} \mid  n < \omega\}$ (that is, $|Y \cap F_{n}| \leq 1$ for all $n < \omega$). 
\end{lemma}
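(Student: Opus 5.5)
Build the finite blocks $F_n$ recursively using clause (ii) of Definition~\ref{Def-SimInvIdeal}, then define the colouring on differences so that each block gets one colour internally and the opposite colour across blocks.

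\emph{Construction of the blocks.} Set $F_0$ to be any finite set with $\varphi(F_0)\geq 0$, say a singleton. Given $F_0,\dots,F_{n-1}$, put $F=\bigcup_{k<n}F_k$ and apply clause (ii) with this $F$ and the integer $n$. This yields a finite $F_n$ with
\begin{itemize}
\item[] $\varphi(F_n)\geq n$ and $F_n\cap F=\emptyset$;
\item[] $D(F_n,F_n)\cap D(F,F)=\emptyset$, $D(F_n,F_n)\cap D(F_n,F)=\emptyset$ and $D(F_n,F)\cap D(F,F)=\emptyset$.
\end{itemize}
Let $X=\bigcup_n F_n$. Then $\varphi(X)\geq\varphi(F_n)\geq n$ for every $n$, so $X\in\mathcal{I}^+$, and $\{F_n\mid n<\omega\}$ is a thin partition of $X$ by construction.

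\emph{Definition of the colouring.} Note that $D$ is closed under inverses: $(y^{-1}x)^{-1}=x^{-1}y$. So for a pair $\{x,y\}$ the set $\{y^{-1}x,\,x^{-1}y\}$ is well defined, and it is unchanged when $x,y$ are replaced by $gx,gy$. Let
\[
\Delta=\bigcup_n D(F_n,F_n),
\]
and define $c(\{x,y\})=0$ if $y^{-1}x\in\Delta$ (equivalently $x^{-1}y\in\Delta$), and $c(\{x,y\})=1$ otherwise. Then $c$ is left-invariant.

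\emph{Key claim.} For $x\neq y$ in $X$, $c(\{x,y\})=0$ if and only if $x,y$ lie in the same block. This is the only step that uses thinness. If $x,y\in F_n$, then clearly $y^{-1}x\in\Delta$. Conversely, suppose $x\in F_n$ and $y\in F_m$ with $m\neq n$. Then $y^{-1}x\in D(F_{\max(m,n)},F)$ with $F=\bigcup_{k<\max(m,n)}F_k$, up to inversion. We must show $y^{-1}x\notin D(F_j,F_j)$ for every $j$; consider three cases.
\begin{itemize}
\item[] If $j=\max(m,n)$, this follows from $D(F_j,F_j)\cap D(F_j,F)=\emptyset$.
\item[] If $j<\max(m,n)$, then $D(F_j,F_j)\subseteq D(F,F)$, and $D(F_{\max},F)\cap D(F,F)=\emptyset$ gives the result.
\item[] If $j>\max(m,n)$, set $F'=\bigcup_{k<j}F_k$. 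Then $y^{-1}x\in D(F',F')$, and $D(F_j,F_j)\cap D(F',F')=\emptyset$ gives the result.
\end{itemize}
Inverses are handled automatically, since each $D(A,B)$ with $A=B$ is symmetric and $D(F_{\max},F)^{-1}=D(F,F_{\max})$. For $j=\max$ we therefore also need $D(F_j,F_j)\cap D(F,F_j)=\emptyset$, which follows by inverting the condition $D(F_j,F_j)\cap D(F_j,F)=\emptyset$. The main obstacle is exactly this bookkeeping of the three disjointness conditions together with inversion symmetry.

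\emph{Conclusion.} Let $Y\subseteq X$ be $c$-homogeneous. If its colour is $0$, then by the claim all points of $Y$ lie in a single block, so $Y$ is finite. If its colour is $1$, then no two points of $Y$ share a block, so $|Y\cap F_n|\leq 1$ for all $n$. This gives the required dichotomy.
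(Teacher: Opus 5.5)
Your proposal is correct and follows the paper's proof: the same recursive construction of the blocks $F_n$ from clause (ii), the same colouring (colour $0$ exactly when $y^{-1}x\in\bigcup_n D(F_n,F_n)$), and the same dichotomy for homogeneous sets. The one addition is that you write out the three-case check, including the inversion symmetry $D(F,F_M)=D(F_M,F)^{-1}$, that two points in different blocks receive colour $1$; the paper leaves this step as ``by clause (3)''.
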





\begin{proof}
    Let $\varphi$ be a lscsm witnessing the fact that $\mathcal{I}$ is a similarity-invariant ideal. Using clause (ii) from Definition \ref{Def-SimInvIdeal}, we can recursively construct a family $\{F_{n} \mid  n < \omega \}$ of finite subsets of $G$ such that:
    \begin{enumerate}
        \item $\varphi(F_{n}) \geq n$ for $n <   \omega$.
        \item $F_{n} \cap F_{m} = \emptyset$ for $n<m<\omega$.
        \item If $n > 0$ and $F=\bigcup_{k<n}F_{k}$, then $D(F_{n},F_{n}) \cap D(F, F) = \emptyset$, $D(F_{n},F_{n}) \cap D(F_{n},F) = \emptyset$, and $D(F_{n}, F) \cap D(F, F) = \emptyset$.
    \end{enumerate}
    
    We set $X = \bigcup_{n < \omega} F_{n}$. Since $\varphi(F_{n}) \to \infty$, it follows that $X \in \mathcal{I}^{+}$. Define the colouring $c \colon [G]^{2} \longrightarrow 2$ by $c(\{x,y\})=0$ if there exist $n < \omega$ and $z,w \in F_{n}$ such that $z^{-1}w = x^{-1}y$. Clearly, $c$ is left-invariant. By clause (3), any $c$-homogeneous subset $Y$ of $X$ is either homogeneous in colour 0 and finite (contained within one single $F_{n}$), or infinite, in which case it is homogeneous in colour 1 and $Y$ is a partial selector for the family $\{F_{n} \mid  n < \omega \}$.
\end{proof}

We next demonstrate that when $G$ is the group of integers $\mathbb{Z}$, both $\mathcal{W}$ and $\mathcal{I}_{\text{$k$-$\mathsf{thin}$}}$ fail to be $\mathbb{Z}$-invariant Ramsey.

\begin{theorem}\label{T:Not Z-invariant Ramsey}
    There exists a $\mathcal{W}$-positive set (resp., $\mathcal{I}_{\kthin}$-positive set) $X \subseteq \mathbb{Z}$ and a $\mathbb{Z}$-invariant colouring $c \colon [\mathbb{Z}]^{2} \longrightarrow 2$ such that every $c$-homogeneous set $Y \subseteq X$ belongs to $\mathcal{W}$ (resp., $\mathcal{I}_{\kthin}$); that is, 
    \[
      \mathcal{W}^{+} \overset{\text{left-inv.}}{ \centernot{\xrightarrow{\hspace{1.1cm}}}} \left(\mathcal{W}^{+}\right)^{2}_{2}
      \left(\text{resp., }\mathcal{I}_{\kthin}^{+} \overset{\text{left-inv.}}{ \centernot{\xrightarrow{\hspace{1.1cm}}}} \left(\mathcal{I}_{\kthin}^{+}\right)^{2}_{2}\right).
    \]
\end{theorem}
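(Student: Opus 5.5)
The plan is to combine Lemma~\ref{L: similarity-invariant} with Lemma~\ref{L: invariant-partition}. By Lemma~\ref{L: similarity-invariant}, both $\mathcal{W}$ and $\mathcal{I}_{\kthin}$ are similarity-invariant $F_\sigma$ ideals on $\mathbb{Z}$. Lemma~\ref{L: invariant-partition} then provides an $\mathcal{I}$-positive set $X=\bigcup_n F_n$, with $\{F_n\}$ a thin partition of $X$ into finite sets, together with a $\mathbb{Z}$-invariant colouring $c$. Every $c$-homogeneous $Y\subseteq X$ is then either finite (hence in $\mathcal{I}$) or a partial selector of $\{F_n\mid n<\omega\}$. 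It therefore suffices to show that every partial selector $Y$ of the partition lies in $\mathcal{I}$, i.e.\ in $\mathcal{W}$, respectively in $\mathcal{I}_{\kthin}$.

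The key step is to prove that every partial selector $Y=\{y_n\mid n\in N\}$, with $y_n\in F_n$, is $1$-thin. Since $\mathcal{I}_{\onethin}\subseteq\mathcal{I}_{\kthin}\subseteq\mathcal{W}$, this settles both cases at once. Suppose $|(g+Y)\cap Y|\geq 2$ for some $g\neq 0$. Then $g=y_i-y_k=y_j-y_l$ with $i\ne j$ and $k\ne l$ (distinct elements of a selector come from distinct pieces), and $i\neq k$, $j\ne l$ because $g\ne 0$. As in the tallness argument for $\mathcal{I}_{\onethin}$, I would take $m=\max\{i,j,k,l\}$, after possibly negating $g$ and swapping roles so that $m=i$. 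The equality $y_i-y_k=y_j-y_l$ then expresses an element of $D(F_m,F)$, where $F=\bigcup_{r<m}F_r$, as an element of $D(F,F)$ if $j,l<m$. This contradicts the third thinness condition $D(F_m,F)\cap D(F,F)=\emptyset$. The remaining configurations must be checked separately.

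The main obstacle is this case analysis on which of $j,l$ can equal $m$. If $j=m$, then $y_i=y_j$ forces $k=l$, which was excluded. If $l=m$ (with $i=m$), then $k=j$, and the equation becomes $y_i-y_k=y_k-y_i$. In $\mathbb{Z}$ this gives $2g=0$, impossible for $g\ne 0$. So only the case $j,l<m$ survives, and there $j\ne l$ since $g\neq0$; this is exactly the case handled above by thinness.

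If the index set of the selector is arranged so that some indices coincide, I would also use the condition $D(F_m,F_m)\cap D(F_m,F)=\emptyset$, but I expect the selector property to prevent such coincidences. Hence $Y$ is $1$-thin, and every $c$-homogeneous subset of $X$ belongs to $\mathcal{I}_{\onethin}\subseteq\mathcal{I}$. This yields both non-arrow relations.
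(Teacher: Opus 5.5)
Your overall route is the paper's: combine Lemmas~\ref{L: similarity-invariant} and~\ref{L: invariant-partition}, show that every partial selector of $\{F_n\mid n<\omega\}$ is $1$-thin, and conclude via $\mathcal{I}_{\onethin}\subseteq\mathcal{I}_{\kthin}\subseteq\mathcal{W}$. There is, however, a genuine gap in your case analysis, in the case $l=m$.

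From $i=l$ you conclude $k=j$, and that does not follow. Substituting $y_l=y_i$ into $y_i-y_k=y_j-y_l$ gives $y_i-y_k=y_j-y_i$, i.e.\ $2y_i=y_j+y_k$. Here $j\neq k$, since otherwise $g=0$. So this is a three-term arithmetic progression $y_k,y_i,y_j$ in $Y$ whose \emph{middle} term lies in the top block $F_m$. In this configuration $y_i-y_k\in D(F_m,F)$ and $y_j-y_i\in D(F,F_m)$, and none of the three thinness conditions compares these two sets, so you get no contradiction.

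In fact the thinness conditions alone cannot exclude this case. The blocks $F_0=\{0\}$, $F_1=\{10\}$, $F_2=\{5\}$ satisfy all three conditions (for $n=2$ one has $D(F_2,F)=\{\pm5\}$ and $D(F,F)=\{\pm10\}$). Yet the selector $Y=\{0,5,10\}$ has $(5+Y)\cap Y=\{5,10\}$, so it is not $1$-thin.

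The missing ingredient is the order structure that comes from the concrete construction in the proof of Lemma~\ref{L: similarity-invariant}. Each new block is chosen inside $a_F+d_F\mathbb{N}$ with $a_F>\max\{|x|\mid x\in F\}$, where $F$ is the union of the earlier blocks. Hence every element of $F_m$ exceeds every element of $\bigcup_{r<m}F_r$, so $y_i$ is the largest of $y_i,y_j,y_k$. Then $y_i-y_k>0>y_j-y_i$, which disposes of the case $l=m$. Equivalently, you could add $D(F_m,F)\cap D(F,F_m)=\emptyset$ to the requirements on the partition. With this added, your remaining cases go through as written.

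The paper's write-up sidesteps the case split by asserting that the four points are pairwise distinct. That assertion fails exactly for a three-term progression, so it too implicitly relies on this ordering of the blocks.
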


\begin{proof}
    According to Lemmas \ref{L: similarity-invariant} and \ref{L: invariant-partition}, there exists a $\mathcal{W}$-positive set $X$, a thin partition $\{F_{n} \mid  n < \omega\}$ of $X$ consisting of finite sets and a left-invariant colouring $c \colon [\mathbb{Z}]^{2} \longrightarrow 2$ such that every $c$-homogeneous subset $Y$ of $X$ is either finite or a partial selector of $\{F_{n} \mid  n < \omega\}$. 
    \begin{claim}
        If $Y$ is a partial selector of $\{F_{n} \mid  n < \omega\}$, then $Y \in \onethin$.
    \end{claim}

    \begin{proofclaim}
         Let $x \in \mathbb{Z} \setminus \{0\}$ be arbitrary and assume, for the sake of contradiction, that $|(x + Y) \cap Y| \geq 2$. Then, there are two distinct elements $y_{0},y_{1} \in (x + Y) \cap Y$. Thus, there exist $y_{2},y_{3} \in Y$ such that $y_{0}=x +y_{2}$ and $y_{1}= x + y_{3}$. Note that this implies that $y_{0},y_{1},y_{2},y_{3}$ are pairwise distinct, and $x=y_{0}-y_{2}=y_{1}-y_{3}$. By taking negatives on both sides of the last equation if necessary, we may assume that $y_{0} \in F_{n}$, where $n=\max\{k \mid |F_{k} \cap \{y_{i} \mid i < 4\}| = 1 \}$. Thus, since $Y$ is a partial selector of $\{F_{n} \mid  n < \omega\}$, we may deduce that $(F_{n} \cap F) \cap (F \cap F) \ne \emptyset$, where $F=\bigcup_{k<n}F_{k}$, contradicting the fact that the partition $\{F_{n} \mid  n < \omega\}$ is thin.
    \end{proofclaim}
    
    As $\onethin \subseteq \mathcal{I}_{\kthin} \subseteq \mathcal{W}$ for $k \geq 1$, the result follows. 
\end{proof}




\begin{corollary}\label{T:P-PointNoZ-Rasey}
Let $u$ be the 
$\mathcal{P}(\mathbb{Z})/\mathcal{I}_{\kthin}$-generic ultrafilter; then $u$ is a P-point that is not $\mathbb{Z}$-Ramsey.
\end{corollary}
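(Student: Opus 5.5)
We will work in the forcing $(\mathcal{I}_{\kthin}^{+},\subseteq)$, which by Remark~\ref{Remark:Just-Krawczyk} is $\sigma$-closed (as $\mathcal{I}_{\kthin}$ is $F_\sigma$). Consequently it adds no new reals, so every subset of $\mathbb{Z}$, every partition of $\mathbb{Z}$ into countably many pieces, and every $\mathbb{Z}$-invariant colouring $c\colon[\mathbb{Z}]^2\to 2$ in the extension already lies in the ground model. The generic $u=\dot G$ is the set of ground-model sets in the generic filter, which is an ultrafilter by the usual density argument. Given $A\subseteq\mathbb{Z}$ and $p\in\mathcal{I}_{\kthin}^+$, either $p\cap A$ or $p\setminus A$ is positive and extends $p$. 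Every element of $u$ is $\mathcal{I}_{\kthin}$-positive.

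\emph{P-point.} Let $\{A_n\mid n<\omega\}\subseteq u$ be in the extension. Because no new $\omega$-sequences of reals are added, the sequence is in $V$ and is forced by some condition $p$ to lie in $u$. By genericity it suffices to show that the conditions $q$ with $q\subseteq^* A_n$ for all $n$ are dense below $p$. Given $q\le p$, we have $q\Vdash A_n\in u$ for each $n$. Then $q\cap A_n$ must be positive: otherwise $q\setminus A_n$ would be a condition forcing $A_n\notin u$. Better, the sets $q_n=q\cap A_0\cap\dots\cap A_n$ are positive by the same reasoning, since the finite intersection is also forced into $u$. This gives a decreasing sequence of positive sets. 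The $\text{P}^+$ property of Remark~\ref{Remark:Just-Krawczyk} yields a positive $r\subseteq q$ with $r\subseteq^* A_n$ for all $n$. Since $r\in u$ by density, it is a pseudo-intersection in $u$. (Nonprincipality is clear, as finite sets are in the ideal.)

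\emph{Not $\mathbb{Z}$-Ramsey.} We use the ground-model colouring $c$ from Theorem~\ref{T:Not Z-invariant Ramsey} (the $\mathcal{I}_{\kthin}$ case), but relativized below an arbitrary condition. The main obstacle is that Theorem~\ref{T:Not Z-invariant Ramsey} gives only one positive set $X$ with a bad colouring, and $u$ need not contain $X$. We handle this by simply forcing below $X$: the condition $X$ forces $X\in u$. Then any $c$-homogeneous $Y\in u$ can be replaced by $Y\cap X\in u$, which is still $c$-homogeneous and a subset of $X$. By Theorem~\ref{T:Not Z-invariant Ramsey}, $Y\cap X\in\mathcal{I}_{\kthin}$, contradicting that every member of $u$ is positive. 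Hence $X\Vdash$ ``$u$ is not $\mathbb{Z}$-Ramsey''.

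To get the statement for the generic ultrafilter outright rather than below $X$, we use left-invariance of the ideal and a density argument. For any positive $p$, we want a positive $X'\subseteq p$ and an invariant colouring whose homogeneous subsets of $X'$ lie in the ideal. The natural approach is to redo Lemma~\ref{L: invariant-partition} inside $p$. Clause (ii) of Definition~\ref{Def-SimInvIdeal} must then hold relative to $p$, i.e.\ finding $E\subseteq p$ of large $\varphi$ avoiding the finitely many forbidden differences. I expect this to be the delicate point. The sets $E\subseteq p$ with $\varphi(E)$ large exist because $p$ is positive, and the forbidden configurations involve only finitely many differences relative to $F$. So one removes from $p$ the finitely many translates/sets determined by $F$ that would create coincidences; these are finite unions of $1$-thin-like pieces, hence in the ideal, leaving a positive remainder from which $E$ is chosen.

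Failing a clean relative version, the fallback is the observation above: the statement is interpreted as holding for the generic obtained by forcing below $X$, which suffices for the consistency of a P-point that is not $\mathbb{Z}$-Ramsey.
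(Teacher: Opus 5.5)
Your committed argument is correct and is essentially the paper's proof. The P-point property comes from the standard $\mathrm{P}^+$/no-new-reals argument, which is what the paper gets by citing Observation 2.4 of the Forcing-with-quotients reference. Non-$\mathbb{Z}$-Ramseyness is obtained, exactly as in the paper, only below the condition $X$ from Theorem~\ref{T:Not Z-invariant Ramsey}.

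Your speculative density paragraph is not justified as sketched. The requirements $D(E,E)\cap D(F,F)=\varnothing$ and $D(E,E)\cap D(E,F)=\varnothing$ constrain pairs and triples of points of $E$, so discarding an ideal set from $p$ cannot secure them. If you want the outright statement for every generic, there is a cleaner route. Since $u\cap\mathcal{I}_{\kthin}=\varnothing$ and $\mathcal{I}_{\kthin}$ is tall and $F_\sigma$, $u$ is not selective by Theorem~\ref{T:Mathias-Selective}. Theorem~\ref{z-ramsey+ppointimpliesselective} then shows that the P-point $u$ cannot be $\mathbb{Z}$-Ramsey.
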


\begin{proof}
    By Observation 2.4 from \cite{MR2861027}, it follows that $u$ is a P-point and, since the quotient algebra $\mathcal{P}(\mathbb{Z})/\mathcal{I}_{\kthin}$ does not add new reals, Theorem \ref{T:Not Z-invariant Ramsey} implies that there exists a condition $X \in \mathcal{I}^{+}_{\kthin}$ such that $X$ forces that $\dot{u}$ is not $\mathbb{Z}$-Ramsey. 
\end{proof}

\section{A \texorpdfstring{$\mathbb{Z}$}{Lg}-Ramsey ultrafilter implies a P-point}

The question whether the existence of a $\mathbb Z$-Ramsey ultrafilter is independent from $\ZFC$ is a very natural one. The arguments from~\cite[p. 459]{MR3681104} show that the existence of a $B$-Ramsey ultrafilter, where $B$ is a Boolean group (and the notion of $B$-Ramsey is defined analogously), imply the existence of a P-point; however, no such arguments have been made for $\mathbb Z$-Ramsey ultrafilters. In this section, we proceed to prove that, if there exists a $\mathbb Z$-Ramsey ultrafilter, then there is a P-point. In particular, it is consistent with $\ZFC$ that there are no $\mathbb Z$-Ramsey ultrafilters.

From now on, we will assume that our ultrafilters are over $\mathbb N$, the set of positive integers. We do not lose generality by doing so, since every ultrafilter over $\mathbb Z$ must contain either $\mathbb N$ or $-\mathbb N$, and the mapping $x\longmapsto -x$ is a bijection between these two sets that preserves addition. We now recall two key definitions.

\begin{definition}[Petrenko and Protasov,~\cite{MR3681104}, p. 456]
    If $u$ is an ultrafilter, then $D(u)$ is defined to be the filter generated by all sets of the form $D(A)$, with $A\in u$.
\end{definition}

In general, $D(u)$ need not be an ultrafilter; the following theorem will be crucial for us.

\begin{theorem}[Petrenko and Protasov,~\cite{MR3681104}, Thm. 2.2 (ii)]
    Let $u$ be a free ultrafilter on $\mathbb Z$ such that $\mathbb Z^{+} \in u$. Then, $D(u)$ is an ultrafilter if and only if $u$ is $\mathbb Z$-Ramsey.\hfill $\square$
\end{theorem}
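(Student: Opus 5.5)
The plan is to translate $\mathbb Z$-invariant colourings of $[\mathbb Z]^2$ into colourings of the set of differences; once that is done, both implications are one line. Throughout, since $D(A)$ is symmetric ($d\in D(A)$ iff $-d\in D(A)$), I read $D(u)$ as a filter on $\mathbb N$: it is generated by the sets $D(A)\cap\mathbb N$ of positive differences $\{y-x\mid x,y\in A,\ x<y\}$, with $A\in u$. Equivalently, one may regard $D(u)$ as a filter on the symmetric subsets of $\mathbb Z\setminus\{0\}$; the argument below does not depend on which reading is chosen.

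First I check that $D(u)$ is a proper filter. For $A,A'\in u$ we have $D(A\cap A')\subseteq D(A)\cap D(A')$. Moreover, since $u$ is free, every $A\in u$ is infinite, so $D(A)\cap\mathbb N\neq\emptyset$. Hence the generating family is closed under finite intersections up to refinement and never contains $\emptyset$.

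Next comes the key correspondence. A colouring $c:[\mathbb Z]^2\to 2$ is $\mathbb Z$-invariant if and only if $c(\{x,y\})$ depends only on $|x-y|$. Indeed, shifting $\{x,y\}$ by $-\min\{x,y\}$ gives $\{0,|x-y|\}$. So invariant colourings correspond bijectively to functions $f:\mathbb N\to 2$ via $c_f(\{x,y\})=f(|x-y|)$. Under this correspondence, a set $A$ is $c_f$-homogeneous in colour $i$ exactly when $D(A)\cap\mathbb N\subseteq f^{-1}(i)$.

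Both directions now follow.

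\emph{($\Leftarrow$)} Suppose $D(u)$ is an ultrafilter and $c$ is an invariant colouring, say $c=c_f$. Then one of $f^{-1}(0),f^{-1}(1)$ lies in $D(u)$, so it contains $D(A)\cap\mathbb N$ for some $A\in u$. This $A$ is $c$-homogeneous.

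\emph{($\Rightarrow$)} Suppose $u$ is $\mathbb Z$-Ramsey and let $B\subseteq\mathbb N$. Take $f$ to be the characteristic function of $B$, and pick a $c_f$-homogeneous $A\in u$. Then $D(A)\cap\mathbb N$ is contained in $B$ or in $\mathbb N\setminus B$, so $D(u)$ decides $B$. Together with properness, this shows $D(u)$ is an ultrafilter.

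The only delicate point is the bookkeeping of signs, that is, making precise on which set $D(u)$ is an ultrafilter. This is why the hypothesis $\mathbb Z^+\in u$ and the restriction to positive differences are used. Everything else is routine.
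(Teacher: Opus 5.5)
Your argument is correct. The correspondence $c_f(\{x,y\})=f(|x-y|)$ identifies $\mathbb Z$-invariant colourings with $2$-colourings of $\mathbb N$. Under it, $A$ is $c_f$-homogeneous in colour $i$ exactly when its set of positive differences lies in $f^{-1}(i)$, so both implications follow at once from your observation that the sets $D(A)$, $A\in u$, form a proper filter base.

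The paper gives no proof of this statement; it quotes it from Petrenko and Protasov. So there is no internal argument to compare against.

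Your reading of $D(A)$ as the set of positive differences is the right one. It is what the paper uses in Sections 4 and 5; for instance, $\mu(D(u))$ only makes sense for a filter on $\mathbb N$. It is also forced: with the symmetric $D(A)$ from Section 2, $D(u)$ could never be an ultrafilter on $\mathbb Z$.

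One correction to your closing remark. Under this reading, the hypothesis $\mathbb Z^{+}\in u$ is never used in your proof. Only freeness is used, to guarantee $D(A)\neq\varnothing$. So your argument actually proves the equivalence for every free ultrafilter on $\mathbb Z$, and the hypothesis only matches the convention that ultrafilters live on $\mathbb N$. It is not part of the "bookkeeping of signs" in your argument.
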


The other main tool we will use are the functions $\lambda,\mu:\mathbb N\longrightarrow\omega$. These are defined in the following way: given an $n\in\mathbb N$, and after expanding it in binary notation, $\lambda(n)$ is the position of the least non-zero digit and $\mu(n)$ is the position of the last one. Another equivalent definition is that $\lambda(n)$ is the greatest $k$ such that $2^k\mid  n$, and $\mu(n)=\lfloor\log_2(n)\rfloor$ (and of course, the main intuition is that, if we are going to think of $n$ as a finite set, then $\lambda(n)$ and $\mu(n)$ are simply the minimum and maximum elements, respectively). The main properties of the functions $\lambda$, $\mu$ that we will use here (all of which can be proven easily and elementarily) are the following.

\begin{proposition}\label{prop:propertieslambdamu}\hfill
    \begin{enumerate}
        \item The function $\mu$ is finite-to-one (in fact, for a given $k$ there are exactly $2^k$ distinct values of $n$ such that $\mu(n)=k$),
        \item if $\mu(n)<\mu(m)$ then $\mu(n+m)=\mu(m)$,
        \item if $\mu(n)=\mu(m)$ then $\mu(m+n)=\mu(n)+1$,
        \item if $\lambda(n)<\lambda(m)$ then $\lambda(n+m)=\lambda(n)$,
        \item if $\lambda(n)=\lambda(m)=k$ then $\lambda(n+m)>k$.\hfill $\square$
    \end{enumerate}
\end{proposition}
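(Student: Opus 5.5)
The plan is to prove each of the five clauses directly from the binary-expansion description of $\lambda$ and $\mu$: $\mu(n)=\lfloor\log_2 n\rfloor$ is the unique $k$ with $2^k\le n<2^{k+1}$, and $\lambda(n)$ is the $2$-adic valuation. Clauses (1)--(3) are statements about magnitudes, so I will use the interval characterization of $\mu$. Clauses (4)--(5) are statements about divisibility, so I will write $n=2^{\lambda(n)}a$ with $a$ odd.

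For (1), I will note that $\mu(n)=k$ holds exactly when $n\in[2^k,2^{k+1})$. This interval contains $2^k$ integers, so $\mu$ is finite-to-one with fibres of size exactly $2^k$.

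For (2), suppose $\mu(n)<\mu(m)=k$. Then $n<2^{\mu(n)+1}\le 2^k$, and together with $2^k\le m<2^{k+1}$ this gives $2^k<m+n<2^{k+1}+2^k$. The upper bound is not yet good enough, and this is the one step needing care. The fix is to sharpen the estimate on $m$: since $m\le 2^{k+1}-1$ and $n\le 2^{k}-1$, we only get $m+n\le 2^{k+1}+2^k-2$, which is still too weak.

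Indeed the claim itself fails: $n=1$, $m=3$ gives $\mu(4)=2\neq\mu(3)=1$. So clause (2) as literally stated is false in general; only the inequality $\mu(m)\le\mu(n+m)\le\mu(m)+1$ holds. I would flag this and check how (2) is used later. Any application presumably involves sums of elements of $A\in u$ in which an \emph{almost-disjointness} condition holds, for instance when the binary supports of $n$ and $m$ do not overlap, or when $\mu(n)<\lambda(m)$. Under the hypothesis $\mu(n)<\lambda(m)$ the addition $n+m$ has no carries, so $\mu(n+m)=\mu(m)$ and $\lambda(n+m)=\lambda(n)$ both hold. I would prove that corrected version, or the weaker two-sided bound, whichever the later argument needs.

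For (3), suppose $\mu(n)=\mu(m)=k$. Then $2^{k+1}\le n+m<2^{k+2}$, so $\mu(n+m)=k+1$ immediately.

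For (4), write $n=2^{i}a$ and $m=2^{j}b$ with $a,b$ odd and $i<j$. Then $n+m=2^{i}(a+2^{j-i}b)$, and $a+2^{j-i}b$ is odd, so $\lambda(n+m)=i$.

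For (5), write $n=2^k a$ and $m=2^k b$ with $a,b$ odd. Then $n+m=2^k(a+b)$, and $a+b$ is even, so $\lambda(n+m)\ge k+1$.

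The main obstacle is clause (2). The other four clauses are one-line computations, but (2) needs either a corrected hypothesis (no-carry, e.g.\ $\mu(n)<\lambda(m)$) or a weakened conclusion ($\mu(n+m)\in\{\mu(m),\mu(m)+1\}$).
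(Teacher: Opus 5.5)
Your verdict is right: clause (2) is false as stated, and your counterexample works, since $\mu(1)=0<1=\mu(3)$ but $\mu(4)=2$. The paper offers no proof of this proposition beyond calling it elementary. Your arguments for (1), (3), (4), (5) are exactly the intended verifications: the interval $[2^k,2^{k+1})$ for $\mu$, and the factorisation $n=2^{\lambda(n)}a$ with $a$ odd for $\lambda$. The correct substitute for (2) is your two-sided bound $\mu(m)\le\mu(n+m)\le\mu(m)+1$.

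Your guess about how (2) is used later is off, however.

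\begin{itemize}
\item The paper uses (2) in Lemma~\ref{lem:fin-to-one}(2), in the phrase ``the maximum of those two if $\kappa=\mu$''. It also uses it in Case $A_0$ of the theorem that $\mu(D(u))$ is a P-point, e.g.\ in $\mu(x_m-x_n)=\mu(x_m-x_0)$ and in $\mu(x_{n_t}-x_0)=k$ for all $t$.
\item The no-carry hypothesis $\mu(n)<\lambda(m)$ is not available in those places. For instance, $\{0,5,25\}$ is monochromatic for the colouring in Lemma~\ref{lem:mulambdainjective}, yet $\mu(5)=2=\lambda(20)$. So $5+20$ carries, although $\mu(25)=\mu(20)$ still holds.
\item What rescues these steps is the parity coordinate of that colouring. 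For $a<b<c$ in a monochromatic set, $\mu(b-a)$, $\mu(c-b)$, $\mu(c-a)$ all have the same parity, and the first two differ. Your two-sided bound then forces $\mu(c-a)=\max\{\mu(b-a),\mu(c-b)\}$.
\end{itemize}

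Accordingly, Lemma~\ref{lem:fin-to-one} needs this parity condition among its hypotheses; as stated, it fails. Take $X=\{0,6,9\}\cup\{9+\sum_{j=3}^{q}2^{j+1}\mid q\ge 3\}$. This set satisfies the lemma's hypotheses, but $9\in D(X)$ has $\mu(9)=3$. That value is not attained by $\mu$ on the consecutive differences $6,3,16,32,\dots$, whose $\mu$-values are $2,1,4,5,\dots$.

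So the right repair of clause (2) is your two-sided bound combined with parity, not the no-carry version.
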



\begin{lemma}\label{lem:mulambdainjective}
    Let $u$ be a $\mathbb Z$-Ramsey ultrafilter. Then, there is a set $X\in u$ such that, for all $a,b,c\in X$ with $a<b<c$, 
    \begin{enumerate}
        \item $\mu(c-b)\neq\mu(b-a)$, and
        \item $\lambda(c-b)\neq\lambda(b-a)$.
    \end{enumerate}
\end{lemma}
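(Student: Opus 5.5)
The plan is to use the Petrenko–Protasov characterization quoted above: since $u$ is $\mathbb Z$-Ramsey and we may assume $\mathbb N\in u$, the filter $D(u)$ is an ultrafilter. So for every finite partition of $\mathbb Z$ there is some $A\in u$ such that $D(A)$ lies inside a single piece. We will choose two finite partitions of $\mathbb N$ by "parity-type" invariants of $\mu$ and $\lambda$. We transfer each to $\mathbb Z$ via $d\mapsto |d|$, putting $0$ in its own piece. Since $D(A)$ is symmetric under $d\mapsto -d$ and omits $0$, landing in one piece means that all positive differences $b-a$ (with $a<b$ in $A$) lie in one class. Finally we take $X=A_1\cap A_2\in u$, noting that $D(X)\subseteq D(A_1)\cap D(A_2)$.

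For clause (1), partition $\mathbb N$ according to the parity of $\mu(d)$, and let $A_1\in u$ be such that every positive difference of elements of $A_1$ has $\mu$ of the same parity. Suppose $a<b<c$ in $A_1$ satisfy $\mu(c-b)=\mu(b-a)=k$. By Proposition~\ref{prop:propertieslambdamu}(3), $\mu(c-a)=\mu((c-b)+(b-a))=k+1$. This has the opposite parity to $k$, even though $c-a$ is also a positive difference from $A_1$, a contradiction.

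For clause (2) the same trick does not work directly. Proposition~\ref{prop:propertieslambdamu}(5) only gives $\lambda(n+m)>k$, not $\lambda(n+m)=k+1$, and pinning down the exact value is the main obstacle. The fix is to refine the partition. Write $d=2^{\lambda(d)}o(d)$ with $o(d)$ odd, and partition $\mathbb N$ into four classes according to the pair $(\lambda(d)\bmod 2,\ o(d)\bmod 4)$. Let $A_2\in u$ be such that all positive differences from $A_2$ lie in one class. Suppose $a<b<c$ in $A_2$ satisfy $\lambda(c-b)=\lambda(b-a)=k$. Then $o(c-b)\equiv o(b-a)\pmod 4$, so $o(c-b)+o(b-a)\equiv 2\pmod 4$, because $1+1$ and $3+3$ are both $\equiv 2$. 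Since $c-a=2^k\bigl(o(c-b)+o(b-a)\bigr)$, this gives $\lambda(c-a)=k+1$ exactly, which has the wrong parity, a contradiction.

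Setting $X=A_1\cap A_2$ then gives both clauses simultaneously, because the class restrictions pass to subsets.
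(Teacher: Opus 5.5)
Your argument is correct and is essentially the paper's proof. It uses the same invariants (parity of $\mu$, parity of $\lambda$, and the odd part modulo $4$, which is the paper's ``$(\lambda(y-x)+1)$-st binary digit'') and the same two contradictions via $\mu(c-a)=k+1$ and $\lambda(c-a)=k+1$. The paper instead applies the definition of $\mathbb Z$-Ramsey directly to one $\mathbb Z$-invariant colouring of $\{x,y\}$ by these invariants of $|y-x|$, so a single homogeneous set handles both clauses; your detour through the Petrenko--Protasov characterization of $D(u)$ is equivalent but forces you to sort out symmetric versus positive differences.
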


\begin{proof}
    Consider the $\mathbb Z$-invariant colouring $c:[\mathbb N]^2\longrightarrow 2^3$ given by defining $c(x,y)=(i,j,k)$ where $i\equiv\mu(y-x)\mod 2$, $j\equiv\lambda(y-x)\mod 2$, and $k$ is the $(\lambda(y-x)+1)$-st binary digit of $y-x$ (equivalently, if $y-x=2^{\lambda(y-x)}z$, where $z$ is odd, then $2k+1\equiv z\mod 4$). By hypothesis, $u$ is a $\mathbb Z$-Ramsey ultrafilter, so there exists a $c$-monochromatic set $X\in u$; we claim that $X$ is as sought. To see this, take any $a,b,c\in X$ with $a<b<c$. Then, if we had $\mu(c-b)=\mu(b-a)=m$, we would have $\mu(c-a)=\mu((c-b)+(b-a))=m+1$, and so the parity of $\mu(c-a)$ is opposite to that of $\mu(c-b)$, contradicting $c$-monochromaticity. Similarly, if $\lambda(c-b)=\lambda(b-a)=l$, then we could write $b-a=2^lA$ and $c-b=2^lB$, with $A\equiv B\mod 4$ and both $A,B$ odd. This means $A+B\equiv 2\mod 4$, and therefore, from
    \begin{equation*}
        c-a=(c-b)+(b-a)=2^l(A+B),
    \end{equation*}
    we can deduce that $\lambda(c-a)=l+1$, again contradicting $c$-monochromaticity of $l$.
\end{proof}

\begin{lemma}\label{lem:fin-to-one}
    Let $u$ be any ultrafilter, and let $X\in U$ be such that, for all $a,b,c\in X$, $\lambda(c-b)\neq\lambda(b-a)$ and $\mu(c-b)\neq\mu(b-a)$. If $\{x_n \mid n<\omega\}$ is the increasing enumeration of $X$, then:
    \begin{enumerate}
        \item for each $a\in D(X)$, there is an $n<\omega$ such that $\lambda(a)=\lambda(x_{n+1}-x_n)$,
        \item for each $a\in D(X)$, there is an $n<\omega$ such that $\mu(a)=\mu(x_{n+1}-x_n)$, and 
        \item the function $n\longmapsto\lambda(x_{n+1}-x_n)$ is finite-to-one.
    \end{enumerate}
\end{lemma}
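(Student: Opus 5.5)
Every $a\in D(X)$ is a sum of consecutive gaps, so the plan is to work with the gaps. Write $d_n=x_{n+1}-x_n$. Up to sign, an element of $D(X)$ is $x_m-x_n$ with $n<m$, that is, $a=d_n+d_{n+1}+\cdots+d_{m-1}$. Since $\lambda(-a)=\lambda(a)$, and $\mu$ is read on $|a|$, it suffices to treat positive $a$. The hypothesis, applied to consecutive triples $x_i<x_{i+1}<x_{i+2}$, gives $\lambda(d_i)\neq\lambda(d_{i+1})$ and $\mu(d_i)\neq\mu(d_{i+1})$. More usefully, it applies to any triple $x_i<x_j<x_k$, so it also compares the values of $\lambda$ and $\mu$ on arbitrary adjacent block sums $x_j-x_i$ and $x_k-x_j$.

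For (1) and (2) I will argue by induction on the block length $m-n$ that $\lambda(x_m-x_n)\in\{\lambda(d_i)\mid n\le i<m\}$, and likewise for $\mu$. For length $1$ there is nothing to prove. For a longer block, split it as $x_m-x_n=(x_m-x_{m-1})+(x_{m-1}-x_n)$. By the hypothesis on the triple $x_n<x_{m-1}<x_m$, the two summands have different $\lambda$. Proposition~\ref{prop:propertieslambdamu}(4) then says $\lambda$ of the sum is the smaller of the two values, and by induction that value is some $\lambda(d_i)$. The argument for $\mu$ is identical, using Proposition~\ref{prop:propertieslambdamu}(2): $\mu$ of the sum is the larger of the two, hence again some $\mu(d_i)$. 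This proves (1) and (2).

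For (3), suppose toward a contradiction that $\lambda(d_i)=k$ for infinitely many $i$. First I will show that between any two indices $i<j$ with $\lambda(d_i)=\lambda(d_j)=k$ there is an index $l$ with $i<l<j$ and $\lambda(d_l)<k$. The induction above gives more than membership: $\lambda(x_m-x_n)=\min\{\lambda(d_l)\mid n\le l<m\}$. Apply this to the blocks $x_{i+1}-x_i=d_i$ and $x_{j+1}-x_{i+1}$, which sit on the triple $x_i<x_{i+1}<x_{j+1}$. If every $d_l$ with $i<l<j$ had $\lambda(d_l)\ge k$, the second block would have $\lambda$ equal to $k$, because $\lambda(d_j)=k$. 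That equals $\lambda(d_i)$, contradicting the hypothesis. So infinitely many occurrences of $k$ force infinitely many indices with $\lambda$-value below $k$.

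The step I expect to be the main obstacle is closing this descent, since it does not immediately terminate. The value $k$ recurs infinitely often, and so does some value below it. Applying the same argument to that smaller value yields infinitely many indices with an even smaller value, and so on. This gives a strictly decreasing sequence of natural numbers, which is impossible; alternatively, one can stop once a value $0$ has been forced to recur. Hence each value of $\lambda(d_n)$ is attained only finitely often, which is (3). I will check carefully that the "minimum" form of the induction really holds, i.e.\ that the use of Proposition~\ref{prop:propertieslambdamu}(4) is legitimate at every step, since everything else depends on it.
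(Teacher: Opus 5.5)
Your proof is correct and follows essentially the paper's route: (1) and (2) by induction on $m-n$, splitting off the last gap and using Proposition~\ref{prop:propertieslambdamu}(2),(4), and (3) by the same well-founded argument on $k$. The paper inducts on $k$ and, past the last occurrence of smaller values, gets a contradiction from two occurrences of $k$ with only larger values between them via the triple $x_n<x_m<x_{m+1}$; this is your descent step with the mirrored triple $x_i<x_{i+1}<x_{j+1}$. The ``minimum'' form $\lambda(x_m-x_n)=\min\{\lambda(d_l)\mid n\le l<m\}$ that you planned to double-check does hold, since at each inductive step the two summands have distinct $\lambda$-values by the hypothesis applied to $x_n<x_{m-1}<x_m$.
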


\begin{proof}
    We prove points (1) and (2) at once, so let $\kappa$ denote either $\mu$ or $\lambda$, accordingly. Given any $a\in D(X)$, there are $n<m$ such that $a=x_m-x_n$. The proof is by induction on $m-n$; there is nothing to prove if $m-n=1$. Now suppose $a=x_{m+1}-x_n$ and the result holds for $x_m-x_n$. By hypothesis $\kappa(x_{m+1}-x_m)\neq\kappa(x_m-x_n)$, so $\kappa(x_{m+1}-x_n)=\kappa(x_{m+1}-x_m+x_m-x_n)$ equals either $\kappa(x_{m+1}-x_m)$, or $\kappa(x_m-x_n)$ (the maximum of those two if $\kappa=\mu$; the minimum if $\kappa=\lambda$). In the former case, we are done; in the latter, by induction hypothesis there is $k<\omega$ such that $\kappa(x_{m+1}-x_n)=\kappa(x_m-x_n)=\kappa(x_{k+1}-x_k)$, and we are done.

    Now to prove point (3), we will prove by induction on $k$ that there are only finitely many $n<\omega$ such that $\lambda(x_{n+1}-x_n)=k$. So suppose the result is true for all $k'<k$; then, there exists an $n'$ such that whenever $n>n'$, we have $\lambda(x_{n+1}-x_n)\geq k$. Assume there are two distinct $n,m$ with $n'<n<m$ such that $\lambda(x_{m+1}-x_m)=\lambda(x_{n+1}-x_n)=k$; suppose further that $n,m$ were chosen so that $m-n$ is minimal. Then, $\lambda(x_{t+1}-x_t)>k$ for all $n<t<m$. We thus have
    \begin{eqnarray*}
        \lambda(x_m-x_n) & = & \lambda(x_m-x_{m-1}+x_{m-1}-x_{m-2}+\cdots+x_{n+1}-x_n) \\
        & = & \lambda(x_{n+1}-x_n)=k,
    \end{eqnarray*}
    so that $k=\lambda(x_{m+1}-x_m)=\lambda(x_m-x_n)$, a contradiction. Therefore, there is at most one value of $n>n'$ with $\lambda(x_{n+1}-x_n)=k$; overall, there are only finitely many $t$ with $k=\lambda(x_{t+1}-x_t)$.
\end{proof}

\begin{theorem}
If $u$ is a $\mathbb Z$-Ramsey ultrafilter, then $\mu(D(u))$ is a P-point.
\end{theorem}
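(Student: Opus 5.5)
We will show that $v=\mu(D(u))$ is a nonprincipal ultrafilter on $\omega$ and that every function on $\omega$ is either constant or finite-to-one on some set in $v$, which is the P-point property. We use the Petrenko--Protasov theorem that $D(u)$ is an ultrafilter (since $\mathbb N\in u$), so $v=\{B\subseteq\omega\mid \mu^{-1}(B)\in D(u)\}$ is an ultrafilter. By Proposition~\ref{prop:propertieslambdamu}(1), $\mu$ is finite-to-one. Also $D(u)$ is nonprincipal: every $D(A)$ with $A\in u$ infinite is infinite, and an ultrafilter on $\mathbb N$ containing only infinite generators cannot contain a singleton, because we can shrink $A\in u$ to avoid any given difference. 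Hence $v$ is nonprincipal.

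Next fix the set $X\in u$ from Lemma~\ref{lem:mulambdainjective}, with increasing enumeration $\{x_n\}$. Put $d_n=x_{n+1}-x_n$. By Lemma~\ref{lem:fin-to-one}(2), every $a\in D(X)$ has $\mu(a)\in M:=\{\mu(d_n)\mid n<\omega\}$, so $M\in v$. The key structural observation is that $\mu(d_n)$ take distinct values on consecutive triples. In fact, the argument from Lemma~\ref{lem:fin-to-one}(3), run with $\mu$ in place of $\lambda$, shows more: by the telescoping rule, $\mu(x_m-x_n)=\max_{n\le t<m}\mu(d_t)$, and the hypothesis forces the maximum to be attained uniquely along any interval. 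So for $k\in M$ the indices $t$ with $\mu(d_t)=k$ are separated by indices with larger $\mu$-value.

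Now let $f:\omega\to\omega$ be given; we want $B\in v$ on which $f$ is constant or finite-to-one. Work with $g=f\circ\mu$ on $\mathbb N$. Colour pairs $\{x,y\}$, $x<y$, of $\mathbb N$ according to the value of $g(y-x)$; this colouring is $\mathbb Z$-invariant. Since $f$ has infinitely many values we cannot use it directly, so we use a 2-colouring tailored to the dichotomy. First, if some fibre $f^{-1}(j)$ is in $v$ we are done. Otherwise, every fibre is $v$-null. For each $A\in u$ with $A\subseteq X$, $\mu(D(A))$ is the set of maxima of $\mu(d)$ over intervals of $A$'s enumeration. Take a sparse subset of $X$ chosen by a fusion: pick $a_0<a_1<\dots$ in $X$ so that the new maxima $\mu(a_{i+1}-a_i)$ lie in prescribed sets. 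Then use the $\mathbb Z$-Ramsey property once more, with the invariant colouring $c(\{x,y\})=$ parity of the index $i$ with $f(\mu(y-x))$ in the $i$-th block of an interval partition of $\omega$ chosen so that each block is $v$-null. This colouring produces a homogeneous $A\in u$, and monochromaticity together with the unique-maximum structure (as in the proof of Lemma~\ref{lem:mulambdainjective}, where $\mu(c-a)=\max$ forces block alternation to fail) forces $\mu(D(A))$ to meet each $f$-fibre finitely.

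The main obstacle is this last step: choosing the $\mathbb Z$-invariant colouring so that homogeneity translates into finite-to-oneness of $f$ on $\mu(D(A))$. Standard ways to phrase the P-point property, such as "given decreasing $B_n\in v$, find a pseudo-intersection", also require a clever invariant colouring. I would first try colouring $\{x,y\}$ by the parity of the least $n$ with $\mu(y-x)\notin B_n$. The plan is to exploit that, for $a<b<c$ in a homogeneous set, $\mu(c-a)=\max(\mu(c-b),\mu(b-a))$, which rigidly constrains how these indices propagate along $A$. If that fails, I would fall back to combining $D(u)$ being an ultrafilter with Lemma~\ref{lem:fin-to-one}(3) to build the pseudo-intersection directly from the increasing sequence of maxima.
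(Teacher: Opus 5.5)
Your preliminary steps are fine: $v=\mu(D(u))$ is an ultrafilter by the Petrenko--Protasov theorem, it is nonprincipal, and $\mu[D(X)]=\{\mu(x_{n+1}-x_n)\mid n<\omega\}$. However, the P-point property itself is never proved, and the colourings you propose cannot prove it.

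Any $\mathbb Z$-invariant colouring of $[\mathbb N]^2$ has the form $c(\{x,y\})=\tilde c(y-x)$. A set $A\in u$ is $c$-homogeneous in colour $j$ exactly when $D(A)\subseteq\tilde c^{-1}(j)$. Both of your suggestions have $\tilde c=h\circ\mu$: the parity of the block containing $f(\mu(y-x))$, and the parity of the least $n$ with $\mu(y-x)\notin B_n$. For such a colouring the $\mathbb Z$-Ramsey property tells you only that $h^{-1}(j)\in v$ for some $j$, which you already know because $v$ is an ultrafilter. Concretely, you learn either that $f^{-1}[\bigcup_{i\equiv j}I_i]\in v$, on which the fibres of $f$ may still be infinite, or that $\bigcup_{n\equiv j}(B_{n-1}\setminus B_n)\in v$, which in general is not almost contained in every $B_n$.

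The unique-maximum structure of $\mu$ along $X$ constrains the shape of the sets $\mu[D(A)]$. It says nothing about how those sets meet the fibres of $f$. So the step ``monochromaticity forces $\mu(D(A))$ to meet each $f$-fibre finitely'' has no justification. Separately, a subset of $X$ built by fusion has no reason to lie in $u$.

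The missing idea is to couple $f\circ\mu$ with a second invariant of differences whose behaviour along $X$ you control. That invariant is $\lambda$, which is exactly the half of Lemma~\ref{lem:mulambdainjective} you never use. The paper colours $n$ according to whether $f(\mu(n))<\lambda(n)$, and splits into two cases.

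\emph{First case:} $\{n\mid \lambda(n)\le f(\mu(n))\}\in D(u)$. Take $X$ with $D(X)$ inside this set. Since $n\mapsto\lambda(x_{n+1}-x_n)$ is finite-to-one (Lemma~\ref{lem:fin-to-one}(3)), $f(\mu(x_{n+1}-x_n))\to\infty$. Hence $f$ is finite-to-one on $\mu[D(X)]$.

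\emph{Second case:} $\{n\mid f(\mu(n))<\lambda(n)\}\in D(u)$.
\begin{enumerate}
\item Drop finitely many points of $X$ so that $\lambda(x_n-x_0)=k_0$ for all $n\ge 1$.
\item Use that $D(u)$ is an ultrafilter a second time, on the partition of $D(\{x_n\mid n\ge 2\})$ according to whether $\mu(x_m-x_n)>\mu(x_n-x_0)$.
\item The other alternative would make $\mu(x_{n_t}-x_0)$ constant along an infinite sequence, contradicting that $\mu$ is finite-to-one.
\item On the resulting set in $D(u)$ one has $\mu(a)=\mu(x_m-x_0)$, hence $f(\mu(a))<\lambda(x_m-x_0)=k_0$.
\end{enumerate}
So $f$ is bounded, and therefore constant, on a set in $v$.
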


\begin{proof}
Let $f:\omega\longrightarrow\omega$ be an arbitrary function. We will show that $f$ is either constant, or finite-to-one, on a set in $\mu(D(u))$. Consider the partition $\omega=A_0\cup A_1$, where
\begin{eqnarray*}
     A_0 & = & \{n\in\mathbb N \mid f(\mu(n))<\lambda(n)\}, \\
     A_1 & = & \{n\in\mathbb N \mid \lambda(n)\leq f(\mu(n))\},
\end{eqnarray*}
let $i\in 2$ be such that $A_i\in D(u)$, and let $X\in u$ be such that $D(X)\subseteq A_i$; by Lemma~\ref{lem:mulambdainjective}, we may assume that whenever $a,b,c\in X$ with $a<b<c$, we have $\mu(c-b)\neq\mu(b-a)$ and $\lambda(c-b)\neq\mu(b-a)$. Let $X=\{x_n \mid n<\omega\}$ be the increasing enumeration of $X$, and note that there are two cases

\begin{description}

\item[Case $A_1\in u$] In this case, we will show that $f\upharpoonright \mu[D(X)]$ is finite-to-one. By Lemma~\ref{lem:fin-to-one} (2) we have $\mu[D(X)]=\{\mu(x_{n+1}-x_n) \mid n<\omega\}$. Now, given a $k<\omega$, by Lemma~\ref{lem:fin-to-one} (3) there is an $N$ such that for $n\geq N$ we have $\lambda(x_{n+1}-x_n)>k$; for each such $n$ we may thus conclude that $f(\mu(x_{n+1}-x_n))\geq\lambda(x_{n+1}-x_n)>k$ since $D(X)\subseteq A_1$. That is, $f^{-1}[\{k\}]\cap D(X)$ is finite (of cardinality at most $N$); since the previous reasoning holds for all $k$, we conclude $f$ is finite-to-one in $\mu[D(X)]$.

\item[Case $A_0\in u$] In this case we will find a $Z\in u$ such that $f$ is constant in $\mu[D(Z)]$. By Lemma~\ref{lem:fin-to-one} (3) and possibly dropping finitely many elements of $X$, we may assume without loss of generality that $k_0=\lambda(x_1-x_0)<\lambda(x_{n+1}-x_n)$ for all $n<\omega$. From here, one may easily prove by induction on $n<\omega$ that $\lambda(x_n-x_0)=k_0$ for all $n\geq 2$. Now let $Y=\{x_n \mid n\geq 2\}$, and consider the partition $D(Y)=B_0\cup B_1$ given by
\begin{eqnarray*}
    B_0=\big\{a\in D(Y) & \mid & \text{for some } m>n\geq 2,\ a=x_m-x_n \\
    & & \text{ and }\mu(x_m-x_n)>\mu(x_n-x_0)\big\}, \\
    B_1=\big\{a\in D(Y) & \mid & \text{for all } m>n\geq 2\text{ such that }a=x_m-x_n, \\
    & & \mu(x_m-x_n)<\mu(x_n-x_0)\big\}. 
\end{eqnarray*}
There is $j\in 2$ such that $B_j\in D(u)$; we will now argue that $j=1$ is impossible. Suppose $B_1\in D(u)$, and let $W\subseteq Y$ be such that $D(W)\subseteq B_1$. Pick an increasing sequence $n_1<n_2<\cdots$ such that each $x_{n_t}\in W$ and $x_{n_{t+1}}-x_{n_t}<x_{n_{t+1}}-x_{n_{t+1}}$; this way we ensure that thet set $B=\{x_{n_{t+1}}-x_{n_t} \mid t<\omega\}$ is infinite, and a subset of $D(W)$. Letting $k=\mu(x_{n_0}-x_0)$, we now claim that, for all $t$, we have $\mu(x_{n_{t+1}}-x_{n_t})<k$. To see this, note that $\mu(x_{n_{t+1}}-x_{n_t})<\mu(x_{n_t}-x_0)$, by the definition of $B_1$; from there one can prove inductively that $\mu(x_{n_t}-x_0)=k$ for all $t$ and conclude the claim. But this is a contradiction since $B$ is infinite but $\mu$ is a finite-to-one function. Hence we conclude that $B_0\in D(u)$. Now, if $a\in B_0$, then there are $n<m$ such that $a=x_m-x_n$ and $\mu(x_m-x_n)>\mu(x_n-x_0)$. This implies that $\mu(a)=\mu(x_m-x_n)=\mu(x_m-x_n+x_n-x_0)=\mu(x_m-x_0)$. Thus, by the definition of $A_0$ we must have $f(\mu(a))=f(\mu(x_m-x_0))<\lambda(x_m-x_0)=k_0$, hence $f$ is bounded (by $k_0$) in $\mu[B_0]$ and so, partitioning $\mu[B_0]=\bigcup_{s<k_0}(\mu[B_0]\cap f^{-1}[\{s\}])$, we conclude that $f^{-1}[\{s\}]\in \mu(D(u))$ for some $s<k_0$, that is, the function $f$ is constant on some element of $\mu(D(u))$.
\end{description}
The treatment of the two (exhaustive) cases finishes the proof that $\mu(D(u))$ is a P-point.
\end{proof}

\begin{corollary}
    It is relatively consistent with $\ZFC$ that there are no $\mathbb Z$-Ramsey ultrafilters.\hfill $\square$
\end{corollary}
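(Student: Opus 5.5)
The corollary follows by combining the preceding theorem with Shelah's classical consistency result that there may be no P-points. We argue by contraposition inside any model of $\ZFC$: if some $\mathbb Z$-Ramsey ultrafilter $u$ exists, we produce a P-point. Since Shelah showed that $\ZFC$ + ``there are no P-points'' is consistent (relative to $\ZFC$), such a model then contains no $\mathbb Z$-Ramsey ultrafilters.

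The key steps are as follows.

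First, let $u$ be a $\mathbb Z$-Ramsey ultrafilter. We may assume it is free, since principal ultrafilters are excluded by the paper's standing conventions, and that $\mathbb N\in u$, using the reflection $x\mapsto -x$ as explained at the beginning of this section.

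Second, by Petrenko and Protasov's theorem (Thm.~2.2(ii) of \cite{MR3681104}), $D(u)$ is an ultrafilter on $\mathbb N$. Hence the image $\mu(D(u))=\{B\subseteq\omega \mid \mu^{-1}[B]\in D(u)\}$ is an ultrafilter on $\omega$.

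Third, we check that $\mu(D(u))$ is nonprincipal. By Proposition~\ref{prop:propertieslambdamu}(1), $\mu$ is finite-to-one. It therefore suffices that $D(u)$ is nonprincipal, which holds because every $D(A)$ with $A\in u$ is infinite; indeed, the proof of the theorem already uses that the relevant sets of differences are infinite.

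Fourth, the preceding theorem gives that $\mu(D(u))$ is a P-point: every function on $\omega$ is constant or finite-to-one on some member of it. So a P-point exists.

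The only step needing care is the nonprincipality in the third step. The theorem's proof treats $\mu(D(u))$ as a genuine P-point, i.e.\ a free ultrafilter, and the finite-to-one property of $\mu$ settles this. The remaining content is the external citation of Shelah's no-P-points model.
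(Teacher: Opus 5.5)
Your proposal is correct and follows the argument the paper leaves implicit: the preceding theorem turns any free $\mathbb Z$-Ramsey ultrafilter $u$ (with $\mathbb N\in u$) into the P-point $\mu(D(u))$, and Shelah's model with no P-points therefore has no $\mathbb Z$-Ramsey ultrafilters. Your explicit check that $\mu(D(u))$ is nonprincipal (each $D(A)$ with $A\in u$ is infinite, and $\mu$ is finite-to-one) is a detail the paper does not spell out, and your argument for it is right.
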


\section{Some \texorpdfstring{$\mathbb{Z}$}{Lg}-Ramsey ultrafilters are Ramsey}




In this section we prove two results stating certain conditions under which some $\mathbb Z$-Ramsey ultrafilters must necessarily be selective. Whether this implication holds in general remains an open problem.

We first improve on Petrenko and Protasov's result~\cite{MR3681104} that every $(\mathbb Z,4)$-Ramsey ultrafilter is selective.

\begin{theorem}\label{thm:z3impliesramsey}
    If an ultrafilter $u\in\beta\mathbb Z$ is $(\mathbb Z,3)$-Ramsey, then it is selective.
\end{theorem}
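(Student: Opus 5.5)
By Theorem~\ref{Zramsey+thinset-implica-ramsey}, it suffices to show that a $(\mathbb Z,3)$-Ramsey ultrafilter $u$ contains a $1$-thin set. Such a $u$ is also $(\mathbb Z,2)$-Ramsey, i.e.\ $\mathbb Z$-Ramsey, by dropping a coordinate. As in Section~4 we assume $\mathbb N\in u$. The plan is to write down one $\mathbb Z$-invariant colouring of $[\mathbb Z]^3$, depending only on the gaps $b-a$ and $c-b$ of a triple $a<b<c$, whose homogeneous sets are forced to be $1$-thin.

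A set $X\subseteq\mathbb N$ fails to be $1$-thin exactly when there are distinct $a,b,c,d\in X$ with $b-a=d-c>0$ and $\{a,b\}\neq\{c,d\}$. So the first step is to pass, via Lemma~\ref{lem:mulambdainjective} and the $\mathbb Z$-Ramsey property, to a set $X\in u$ in which consecutive gaps of triples have distinct $\mu$ and distinct $\lambda$. Lemma~\ref{lem:fin-to-one} then describes $D(X)$ through the gaps $g_n=x_{n+1}-x_n$ of the increasing enumeration. The task is to rule out a repeated difference $x_j-x_i=x_l-x_k$ with $(i,j)\neq(k,l)$.

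For triples $a<b<c$, define $c(\{a,b,c\})$ to record, with a bounded number of colours (reduced to $2$ by iterating the Ramsey property finitely many times), the following data:
\begin{itemize}
\item whether $\mu(c-b)>\mu(b-a)$;
\item whether $\lambda(c-b)>\lambda(b-a)$;
\item parity information of the kind used in Lemma~\ref{lem:mulambdainjective}.
\end{itemize}
This colouring is clearly shift-invariant. On a homogeneous $X\in u$ the pattern is uniform, and I would argue as follows. If $\mu(c-b)<\mu(b-a)$ for all triples, then $\mu(x_n-x_0)$ would be strictly decreasing in $n$, which is impossible. Hence $\mu(c-b)>\mu(b-a)$ for all triples, so $\mu(g_n)$ is strictly increasing, and $\mu(x_j-x_i)=\mu(g_{j-1})$. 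A coincidence $x_j-x_i=x_l-x_k$ therefore forces $j=l$. Once the right endpoints agree, equal differences give equal left endpoints, so $i=k$. This would show $X$ is $1$-thin, finishing the proof.

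The step I expect to be the main obstacle is justifying that $\mu(x_j-x_i)$ is determined by the top gap $g_{j-1}$. That needs $\mu$ of a sum of gaps with strictly increasing $\mu$ to equal the largest one. This is false in general: $2^m$ plus smaller terms can carry into $2^{m+1}$. I would try to fix it by strengthening the colouring so that $\mu(c-b)\geq\mu(b-a)+2$, or by controlling the second binary digit. Alternatively, I would use the $\lambda$-ordering, since by Proposition~\ref{prop:propertieslambdamu}(4) sums of gaps with distinct $\lambda$ behave exactly, with no carrying. Then $\lambda(x_j-x_i)$ would be the $\lambda$ of the gap with minimal $\lambda$ among $g_i,\dots,g_{j-1}$, and combining this with the $\mu$-data would pin down both endpoints. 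Using the third coordinate of the triple to compare a gap against a longer difference is what lets the $3$-dimensional colouring enforce these conditions uniformly, and this is where the improvement from $4$ to $3$ should come from.
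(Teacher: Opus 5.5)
Your overall architecture matches the paper's: a shift-invariant colouring of triples $a<b<c$ that depends only on the gaps $b-a$ and $c-b$, a homogeneous set in $u$ that is forced to be $1$-thin, and then Theorem~\ref{Zramsey+thinset-implica-ramsey}. However, the argument breaks at exactly the step you flag, and your first proposed repair does not fix it. The identity $\mu(x_j-x_i)=\mu(g_{j-1})$ already fails for two gaps: $\mu(3)=1<\mu(6)=2$ while $\mu(9)=3$. So your suspicion about carries is justified, and item (2) of Proposition~\ref{prop:propertieslambdamu} cannot be invoked here. Requiring $\mu(c-b)\geq\mu(b-a)+2$ does not help either, since $\mu(1)=0$, $\mu(7)=2$ and $\mu(8)=3$. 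Consequently, nothing you have written actually justifies ``a coincidence $x_j-x_i=x_l-x_k$ forces $j=l$''. (A smaller slip: if $\mu(c-b)<\mu(b-a)$ held on all triples, it is $\mu(g_n)$, not $\mu(x_n-x_0)$, that would strictly decrease. The conclusion that this case is impossible survives.)

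The missing observation is that you never need the exact value of $\mu$ on a difference, only a size comparison. Your condition $\mu(c-b)>\mu(b-a)$ already gives $c-b\geq 2^{\mu(c-b)}\geq 2^{\mu(b-a)+1}>b-a$. Comparing gaps directly is precisely the paper's proof. Colour $\{x<y<z\}$ by whether $z-y>y-x$. A homogeneous $A\in u$ with $A\subseteq\mathbb N$ must take colour $1$, because for $x<y$ in $A$ one can pick $z\in A$ arbitrarily large. Hence $x_{n+1}-x_n>x_n-x_0$ for all $n\geq 1$. Now suppose $x_j-x_i=x_l-x_k$ with $l<j$. Then $x_l-x_k\leq x_{j-1}-x_0<x_j-x_{j-1}\leq x_j-x_i$, a contradiction. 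So $j=l$, then $i=k$, and $A$ is $1$-thin. This route needs no Lemma~\ref{lem:mulambdainjective}, no Lemma~\ref{lem:fin-to-one}, no parity data and no extra colours.

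Your $\lambda$ fallback would also close the gap on its own. Lemma~\ref{lem:mulambdainjective} excludes $\lambda(c-b)=\lambda(b-a)$. Homogeneity for the colouring ``whether $\lambda(c-b)>\lambda(b-a)$'' then forces $\lambda(g_n)$ to be strictly increasing, since the other colour would make it strictly decreasing in $\omega$. Item (4) of Proposition~\ref{prop:propertieslambdamu}, which does hold, then gives $\lambda(x_j-x_i)=\lambda(g_i)$. This pins down $i$, hence $j$, with no $\mu$-data at all.
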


\begin{proof}
    Define the colouring $c:[\mathbb Z]^3\longrightarrow 2$ given by $c(\{x,y,z\})=1$ iff $z-y>y-x$, whenever $x<y<z$. Let $A\in u$ be a set such that $[A]^3$ is $c$-monochromatic. Note that, by picking arbitrary $x,y\in A$ with $x<y$, (since $A$ is unbounded) we can always find a $z\in A$ such that $z-y>y-x$, which means that $c(\{x,y,z\})=1$ and so $[A]^3$ is monochromatic in colour $1$.

    Suppose we enumerate increasingly the elements of $A$ as $\{x_n\mid n<\omega\}$. The fact that $[A]^3$ is monochromatic in colour $1$ means that $x_2-x_1>x_1-x_0$ and, in general, $x_{n+1}
    -x_n>x_n-x_1$. That is, the distance between each element of $A$ and the next one is strictly greater than any other distance between two lesser elements. This is easily seen to imply that $A$ must be a thin set, and so the result follows by Theorem~\ref{Zramsey+thinset-implica-ramsey}
\end{proof}

Having proved Theorem~\ref{thm:z3impliesramsey}, it is no longer necessary to consider the notion of $(\mathbb Z,n)$-Ramsey ultrafilters for $n\geq 3$ (as this is already equivalent to the notion of a selective ultrafilter). Therefore, to simplify notation, we will from now on simply write $\mathbb Z$-Ramsey instead of $(\mathbb Z,2)$-Ramsey.

The next theorem to prove, which is the main result of this section, is that every $\mathbb Z$-Ramsey ultrafilter that is also a P-point must be a selective ultrafilter. The remainder of the section is devoted to prove this result.

\begin{lemma}\label{Zramsey-no-arithmetic-progression}
    Let $u$ be a $\mathbb Z$-Ramsey ultrafilter. Then, there exists an $A\in u$ without arithmetic progressions of length three.
\end{lemma}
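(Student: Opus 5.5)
Since $u$ contains $\mathbb N$ or $-\mathbb N$, and $x\mapsto -x$ preserves both shift-invariance of colourings and arithmetic progressions, we may assume $u$ lives on $\mathbb N$. The plan is to apply Lemma~\ref{lem:mulambdainjective} to obtain a set $X\in u$ such that, for all $a<b<c$ in $X$, we have $\lambda(c-b)\neq\lambda(b-a)$. We then show that this $X$ contains no arithmetic progression of length three. Indeed, if $a<b<c$ in $X$ formed such a progression, then $c-b=b-a$, so in particular $\lambda(c-b)=\lambda(b-a)$, contradicting the choice of $X$. (Clause (1) of that lemma, concerning $\mu$, yields the same contradiction, so either clause alone suffices.)

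It remains only to check that Lemma~\ref{lem:mulambdainjective} applies as stated. Its colouring takes values in $2^3$, which is more than two colours, whereas the definition of $\mathbb Z$-Ramsey only provides homogeneous sets for $2$-colourings. This multi-colour issue is the one point needing care. The standard remedy is to iterate: split the $8$-colouring into three successive binary colourings (one for each coordinate). We find a set in $u$ homogeneous for the first, then intersect it with a set in $u$ homogeneous for the second, and so on. Because each colouring is $\mathbb Z$-invariant and homogeneity is inherited by subsets, the final intersection lies in $u$ and is homogeneous for the full colouring $c$.

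Alternatively, we can bypass the lemma entirely with a direct binary colouring. For $x<y$, set $c(\{x,y\})=(\lambda(y-x)+\epsilon(y-x))\bmod 2$, where $\epsilon(y-x)$ is the digit after the lowest $1$; this is clearly $\mathbb Z$-invariant. If $a<b<c$ formed a progression with common difference $d=2^l A$, $A$ odd, then $c-a=2d=2^{l+1}A$. Since $A$ is odd and we are multiplying by $2$, $\epsilon(2d)=\epsilon(d)$ while $\lambda(2d)=\lambda(d)+1$. So the parity of $\lambda+\epsilon$ flips, and $c$ assigns different colours to $\{a,b\}$ and $\{a,c\}$. Hence no $c$-homogeneous set contains a $3$-term progression, and homogeneity for $c$ gives the required $A\in u$. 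I expect no step to be a genuine obstacle; the simplest presentation is to cite Lemma~\ref{lem:mulambdainjective}, after noting the reduction of finitely many colours to two.
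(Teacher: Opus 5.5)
Your proof is correct and is essentially the paper's: apply Lemma~\ref{lem:mulambdainjective} and observe that $c-b=b-a$ would force equal $\mu$-values. You use the $\lambda$ clause instead, which makes no difference. Your remark that the $2^3$-colouring in that lemma must be handled by intersecting homogeneous sets for three $\mathbb Z$-invariant binary colourings correctly fills a step the paper leaves implicit. In your direct alternative the $\epsilon$ term is superfluous, since $\lambda(y-x) \bmod 2$ alone already changes between $d$ and $2d$.
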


\begin{proof}
    By Lemma~\ref{lem:mulambdainjective}, there is an $A\in u$ such that, whenever $a,b,c\in A$ with $a<b<c$, we have $\mu(c-b)\neq\mu(b-a)$. In particular, $c-b\neq b-a$, so the elements $a,b,c$ do not form an arithmetic progression.
\end{proof}

Recall that, for subsets $A,B\subseteq\mathbb N$, we denote 
$D(A,B)=\{y-x\mid x\in A,y\in B,\text{ and }x<y\}$. Similarly, for an $x_0\in\mathbb N$ we use the notation $D(x_0,A)=\{y-x_0\mid y\in A\text{ and }x_0<y\}$. Note, e.g., that if we have a set $A$ containing an $x_0\in A$ with $D(x_0,A)\cap D(B,B)\neq\varnothing$, where $B=A\setminus(x_0+1)$, then $A$ is not a thin set.

\begin{lemma}\label{lemma:make-distances-injective}
    Let $u$ be a $\mathbb Z$-Ramsey ultrafilter, let $x_0\in\mathbb N$ and let $A\in u$. Then, there exists a set $B\in u$, with $B\subseteq A$, such that for every two distinct $y,z\in B$, it must be the case that $x_0+z-y\notin B$ (in other words, $D(B,B)\cap D(x_0,B)=\varnothing$).
\end{lemma}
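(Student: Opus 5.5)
The plan is to use the fact that $D(u)$ is an ultrafilter (Petrenko and Protasov's theorem: $u$ is $\mathbb Z$-Ramsey iff $D(u)$ is an ultrafilter) to decide whether the set of differences $D(x_0,\cdot)$ is "$D(u)$-large". Only finitely many $y,z$ can give $x_0+z-y\in B$ with $z<y$: in that case $x_0+z-y<x_0$, and we will remove $\{0,\dots,x_0\}$ from $B$ anyway. So it suffices to get $D(B,B)\cap D(x_0,B)=\varnothing$ with $D$ of positive differences. First shrink $A$ to $C=A\cap X\setminus(x_0+1)\in u$, where $X\in u$ is given by Lemma~\ref{lem:mulambdainjective}. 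Then for $a<b<c$ in $C$ we have $\mu(c-b)\neq\mu(b-a)$ and $\lambda(c-b)\neq\lambda(b-a)$, and by Lemma~\ref{lem:fin-to-one}(3) the $\lambda$-values of consecutive gaps of $C$ are finite-to-one.

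Since $D(u)$ is an ultrafilter, either $\mathbb N\setminus D(x_0,C)\in D(u)$ or $D(x_0,C)\in D(u)$.

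\emph{First case.} There is $C'\in u$ with $D(C')\cap D(x_0,C)=\varnothing$, and $B=C\cap C'$ works: if $y<z$ are in $B$ and $x_0+z-y\in B\subseteq C$, then $z-y\in D(C')\cap D(x_0,C)$.

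\emph{Second case.} We aim for a contradiction. We get $C''\in u$, which we may take with $C''\subseteq C$, such that $D(C'')\subseteq D(x_0,C)$; that is, $x_0+z-y\in C$ for all $y<z$ in $C''$.

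Take $y<z<w$ in $C''$ and set $p=x_0+z-y$, $q=x_0+w-y$, $r=x_0+w-z$. All three lie in $C$, and $p+r=q+x_0$. If $z-y<w-z$, then $p<r<q$ and
\[
q-r=z-y,\qquad r-p=(w-z)-(z-y).
\]
Now suppose moreover $\lambda(w-z)>\lambda(z-y)$. Then Proposition~\ref{prop:propertieslambdamu}(4) gives $\lambda(r-p)=\lambda(z-y)=\lambda(q-r)$. Since $p<r<q$ are elements of $C\subseteq X$, this contradicts the $\lambda$-property of $X$. The case $z-y>w-z$ is symmetric: then $r<p<q$, and one asks instead that $\lambda(z-y)>\lambda(w-z)$, getting $\lambda(p-r)=\lambda(w-z)=\lambda(q-p)$. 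As a back-up, the analogous computation with $\mu$ (items (2),(3) of Proposition~\ref{prop:propertieslambdamu}) may give the contradiction in the remaining configurations.

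The main obstacle is producing a triple $y<z<w$ in $C''$ in which the longer of the two gaps $z-y$, $w-z$ also has the strictly larger $\lambda$-value. I would take $y,z,w$ to be three consecutive elements $x_{n-1},x_n,x_{n+1}$ of $C''$. By Lemma~\ref{lem:fin-to-one}(3) applied to $C''$ (which inherits the hypotheses from $X$), $n\mapsto\lambda(x_{n+1}-x_n)$ is finite-to-one and consecutive values differ. So there are infinitely many $n$ with $\lambda(x_{n+1}-x_n)>\lambda(x_n-x_{n-1})$, and also infinitely many with the reverse inequality.

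What remains is to match the comparison of $\lambda$'s with the comparison of lengths. If necessary, I would first thin $C''$ inside $u$ using another $\mathbb Z$-invariant colouring. For instance, colour $\{x,y\}$ by the parity of $\mu(y-x)-\lambda(y-x)$, combined with the colouring from the proof of Lemma~\ref{lem:mulambdainjective}. The goal is to force, on a homogeneous set, that larger gaps (in $\mu$) carry larger $\lambda$, or else consistently smaller $\lambda$; the latter is then handled by the symmetric configuration. Either way the second case is impossible, and $B$ from the first case is as required.
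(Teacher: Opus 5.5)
Your overall architecture works. Case~1 is correct, and your Case~2 computation is right: any $y<z<w$ in $C''$ in which the \emph{longer} of the two gaps carries the strictly larger $\lambda$ yields three elements of $C$ whose two gaps have equal $\lambda$. The gap is exactly the step you flag as the main obstacle, and the plan you sketch does not close it.

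\textbf{Consecutive triples.} The facts you cite (adjacent $\lambda$-values differ; $n\mapsto\lambda(x_{n+1}-x_n)$ is finite-to-one) say nothing about how the $\lambda$-order of two adjacent gaps relates to their size order. There are sets satisfying both conclusions of Lemma~\ref{lem:mulambdainjective} in which, at \emph{every} consecutive triple, the longer gap has the smaller $\lambda$. For example, take consecutive gaps alternately $2^{10k+5}+2^k$ and $2^{10k+3}+2^{k+2}$, $k=0,1,\dots$. Your claim that $\lambda$ also drops infinitely often is false too: for $x_n=2^n-1$ the values $\lambda(x_{n+1}-x_n)=n$ only increase.

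\textbf{The extra colouring.} It adds nothing: the parity of $\mu(y-x)-\lambda(y-x)$ is already constant on any set homogeneous for the colouring of Lemma~\ref{lem:mulambdainjective}. No colouring forcing the needed correlation is exhibited.

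\textbf{The fallback.} The alternative ``longer gaps consistently carry smaller $\lambda$'' is \emph{not} handled by the symmetric configuration. In both of your configurations the contradiction needs the longer gap to have the strictly larger $\lambda$; in the symmetric one, $z-y>w-z$ and $\lambda(z-y)>\lambda(w-z)$. So that alternative defeats both configurations. The $\mu$ back-up is not carried out.

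\textbf{The repair.} Give up consecutiveness. Write $C''=\{c_0<c_1<\cdots\}$ and $g_j=c_{j+1}-c_j$. Since $j\mapsto\lambda(g_j)$ is finite-to-one, there is an $m$ with $\lambda(g_j)>\lambda(g_m)=:K$ for all $j>m$; take the last index at which the minimum value is attained. For $n>m+1$, $c_n-c_{m+1}$ is a sum of gaps all divisible by $2^{K+1}$. Now take $y=c_m$, $z=c_{m+1}$ and $w=c_n$ with $n$ large enough that $w-z>z-y$. This puts you in your first configuration, so Case~2 is impossible and the $B$ from Case~1 works.

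With this paragraph your proof is complete, and it shows slightly more: the ``bad'' case never occurs. Your Case~1 also automatically excludes the degenerate coincidence $x_0+z-y=y$, i.e.\ $z-y=y-x_0$. The paper's colour-1 argument, which needs $x\neq y$ to evaluate $c(\{x,y\})$, does not treat this case explicitly.

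\textbf{The paper's route.} It is shorter and needs no $\lambda$-analysis. It passes to a 3-AP-free $A'\in u$ and applies $\mathbb Z$-Ramseyness to the colouring ``$x_0+(y-x)\in A'$'', which is the same dichotomy as your $D(u)$-split. In the colour-1 case it does not refute that case; instead it shows the homogeneous set still works. A violation $z-y=x-x_0$ with $x<y<z$ gives $w=x_0+y-x\in A'$ and hence the progression $w<y<z$ in $A'$.
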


\begin{proof}
    First, use Lemma~\ref{Zramsey-no-arithmetic-progression} to obtain an $A'\subseteq A$, $A'\in u$ containing no nontrivial arithmetic progressions. Define now a colouring $c:[A']^2\longrightarrow 2$ by the formula
    \begin{equation*}
        c(\{x,y\})=\begin{cases}
        1;\text{ if }x_0+y-x\in A'\text{ (that is, if }y-x\in D(x_0,A')\text{)} \\
        0; \text{ otherwise.}
        \end{cases}
    \end{equation*}
    Note that $c$ is $\mathbb Z$-invariant and, therefore, there exists a $B\subseteq A'$, with $B\in u$, such that $[B]^2$ is $c$-monochromatic. We claim that $B$ is as required in the statement of the lemma. This is clearly the case, by the definition of $c$, in case $[B]^2$ is monochromatic in colour $0$ (since $D(x_0,B)\subseteq D(x_0,A')$). So we assume without loss of generality that $[B]^2$ is monochromatic in colour $1$; since $u$ is nonprincipal, assume also that $x_0<\min(B)$. Aiming for a contradiction, suppose that $D(B,B)\cap D(x_0,B)\neq\varnothing$, and pick $x,y,z\in B$ such that $z-y=x-x_0$. Since $x_0$ is the smallest of the four numbers under consideration, necessarily $z$ must be the largest. Note also that we may assume without loss of generality that $x<y<z$ (otherwise, if $x_0<y<x<z$, then we also have that $z-x=y-x_0$ and so we might simply relabel $x$ and $y$). In particular, $x,y,z\in A'$; since $x,y\in B$, then $c(\{x,y\})=1$ and so we must have $w=x_0+y-x\in A'$. But then $y-w=x-x_0=z-y$, which implies that $\{w,y,z\}$ is a nontrivial arithmetic progression in $A'$, a contradiction.
\end{proof}

\begin{corollary}\label{more-general-make-distances-injective}
Let $u$ be a $\mathbb Z$-Ramsey ultrafilter, let $F\in[\mathbb N]^{<\omega}$, and let $A\in u$. Then, there is a $B\in u$, $B\subseteq A$, such that there are no distinct $y,z\in B$ and $w\in B$, $x\in F$ such that $z-y=w-x$ (in other words, such that $D(B,B)\cap D(F,B)=\varnothing$).
\end{corollary}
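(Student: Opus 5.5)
The plan is to iterate Lemma~\ref{lemma:make-distances-injective} once for each element of $F$. Enumerate $F=\{f_1,\ldots,f_k\}$ and set $B_0=A$. Given $B_{i-1}\in u$, apply Lemma~\ref{lemma:make-distances-injective} with $x_0=f_i$ and the set $B_{i-1}$. This yields $B_i\in u$ with $B_i\subseteq B_{i-1}$ and $D(B_i,B_i)\cap D(f_i,B_i)=\varnothing$. Finally, let $B=B_k\in u$, which is a subset of $A$.

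To check that $B$ works, fix $i\le k$. We have $B\subseteq B_i$, and both $D(\cdot,\cdot)$ and $D(f_i,\cdot)$ are monotone under inclusion. Hence
\[
D(B,B)\cap D(f_i,B)\subseteq D(B_i,B_i)\cap D(f_i,B_i)=\varnothing .
\]
Since $D(F,B)=\bigcup_{i\le k}D(f_i,B)$, it follows that $D(B,B)\cap D(F,B)=\varnothing$.

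There is one small wrinkle. The statement asks that there be no distinct $y,z\in B$ with $z-y=w-x$ for some $w\in B$ and $x\in F$. If $y>z$, the difference $z-y$ is negative while $w-x$ could be positive, so I would treat only $y<z$ and $x<w$. This matches the convention that $D(A,B)$ and $D(x_0,B)$ consist of positive differences.

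To make sure every relevant pair $x<w$ is covered, I would shrink $B$ further so that $\min B>\max F$, which is possible because $u$ is nonprincipal. The lemma's proof already assumes $x_0<\min B$ in the same way. There is no real obstacle here: only finitely many applications of the lemma are needed, and $u$ is closed under finite intersections.
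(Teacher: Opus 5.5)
Your proposal is correct and is essentially the paper's argument: the paper also proves the corollary by induction on $|F|$, applying Lemma~\ref{lemma:make-distances-injective} once per element of $F$ and relying on the fact that $D(B,B)$ and $D(x_0,B)$ only shrink when $B$ shrinks, which is exactly your iteration. Your extra step of discarding the elements of $B$ up to $\max F$ is a harmless refinement. It ensures $w-x>0$ always, so the literal ``no distinct $y,z$'' formulation reduces to $D(B,B)\cap D(F,B)=\varnothing$, a point the paper leaves implicit.
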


\begin{proof}
    Induction on $|F|$, the case $|F|=1$ being Lemma~\ref{lemma:make-distances-injective} and, for $|F|>1$, it suffices to pick $x_0\in F$, use $A'$ as in the conclusion of the Corollary for $F\setminus\{x_0\}$, and use again Lemma~\ref{lemma:make-distances-injective} to get a further $B\subseteq A'$, $B\in u$, satisfying the desired property.
\end{proof}

\begin{lemma}\label{makedistancesbig}
Let $u$ be a $\mathbb Z$-Ramsey ultrafilter, and let $d\in\mathbb N$ be arbitrary. Then, there exists an $A\in u$ such that any distance between two elements of $A$ is greater than $d$ (i.e., for any distinct $x,y\in A$ with $x<y$, we have $y-x>d$).
\end{lemma}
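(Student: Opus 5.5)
Colour pairs by whether their distance is small. Define $c:[\mathbb Z]^2\longrightarrow 2$ by $c(\{x,y\})=0$ if $|y-x|\le d$ and $c(\{x,y\})=1$ otherwise. This colouring depends only on $|y-x|$, so it is $\mathbb Z$-invariant. Since $u$ is $\mathbb Z$-Ramsey, there is an $A\in u$ such that $[A]^2$ is $c$-monochromatic.

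Next I rule out colour $0$. Since $u$ is nonprincipal, $A$ is infinite, so it contains two elements $x<y$ with $y-x>d$. Indeed, any $d+2$ distinct elements of $\mathbb Z$ contain two at distance at least $d+1$, by pigeonhole on an interval of length $d$. So $c(\{x,y\})=1$, and $[A]^2$ must be monochromatic in colour $1$. Hence every two distinct elements of $A$ are at distance greater than $d$, as required.

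There is no genuine obstacle here; the only point needing care is nonprincipality, which guarantees that $A$ is infinite. (Alternatively, one could avoid the Ramsey property altogether: partition $\mathbb Z$ into the residue classes modulo $d+1$, pick the class lying in $u$, and note that distinct elements of one class differ by at least $d+1$. The colouring argument is the one I would present, since it matches the style of the surrounding lemmas.)
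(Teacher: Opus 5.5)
Your proof is correct and is exactly the paper's argument: colour pairs according to whether their distance exceeds $d$, note that the colouring is $\mathbb Z$-invariant, and use the infinitude of the homogeneous set $A\in u$ to rule out colour $0$. Your parenthetical residue-class argument modulo $d+1$ is also valid, and it shows that the lemma holds for every ultrafilter without using the $\mathbb Z$-Ramsey property at all.
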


\begin{proof}
    Consider the $\mathbb Z$-invariant colouring $c:[\mathbb Z]^2\longrightarrow 2$ given by $c(\{x,y\})=1$ iff $|y-x|>d$. Any $A\in u$ such that $[A]^2$ is $c$-monochromatic will be monochromatic in colour $1$ (since $A$ is infinite and hence unbounded in $\mathbb N$). Therefore the set $A$ is as required.
\end{proof}

Recall the P-point game for a nonprincipal ultrafilter $u$. Players I and II alternate plays; in the $n$-th inning, player I chooses an $A_n\in u$ and then player II chooses a finite $F_n\subseteq A_n$; in the end, player II wins the game if and only if $\bigcup_{n<\omega}F_n\in u$. It is a classical folklore result (attributed to Galvin and McKenzie by Shelah in Chapter VI of \cite{MR1623206}) that a nonprincipal ultrafilter $u$ is a P-point if and only if the P-point game for $u$ is not determined (equivalently, $u$ is a P-point if and only if I does not have a winning strategy for this game, since it is easily shown that player II never has a winning strategy).

\begin{theorem}\label{z-ramsey+ppointimpliesselective}
    Let $u$ be a $\mathbb Z$-Ramsey ultrafilter that is also a P-point. Then, $u$ is a selective ultrafilter.
\end{theorem}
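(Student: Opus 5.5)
By Theorem~\ref{Zramsey+thinset-implica-ramsey}, it suffices to show that $u$ contains a $1$-thin set. In fact we will produce a set $X\in u$ whose increasing enumeration $\{x_n\mid n<\omega\}$ satisfies, for every $n$, $D(x_n,X_{>n})\cap D(X_{>n},X_{>n})=\varnothing$ and $x_{n+1}-x_n>\max D(\{x_0,\dots,x_n\})$, where $X_{>n}=X\setminus(x_n+1)$. Such a set is $1$-thin. Indeed, suppose $z-y=w-x$ with $x<w$, $y<z$, $\{x,w\}\neq\{y,z\}$, and w.l.o.g.\ $x<y$. Then $x=x_n$ and $w,y,z\in X_{>n}$, except in the degenerate case $w=y$. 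The nondegenerate cases are excluded by the first condition. The degenerate case is a three-term arithmetic progression $x<y<z$, which is excluded either by the distance condition or by working inside a set with no arithmetic progressions of length three (Lemma~\ref{Zramsey-no-arithmetic-progression}).

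To build $X$ we use the P-point game characterization. Since $u$ is a P-point, player~I has no winning strategy in the P-point game for $u$. We describe a strategy for I.

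\begin{enumerate}[(a)]
\item I starts with $A_0\in u$ containing no three-term arithmetic progressions.
\item After II has played finite sets $F_0,\dots,F_{n-1}$, let $F=\bigcup_{i<n}F_i$. Player I uses Corollary~\ref{more-general-make-distances-injective} together with Lemma~\ref{makedistancesbig} to choose $A_n\in u$ with the following properties:
\begin{itemize}
\item $A_n\subseteq A_{n-1}$;
\item $\min A_n>\max F$;
\item $D(A_n,A_n)\cap D(F,A_n)=\varnothing$;
\item all distances in $A_n$ exceed $2\max F$.
\end{itemize}
To keep the differences from different blocks apart, I may also shrink $A_n$ so that $A_n$ avoids the finitely many translates $f+D(F,F)$ for $f\in F$. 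This is possible because those translates form a finite set, and $u$ is nonprincipal.
\end{enumerate}

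This is not a winning strategy, so some run produces $\bigcup_n F_n\in u$. However, the finite pieces $F_n$ themselves need not be thin, since II's moves are arbitrary subsets of $A_n$.

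This is the main obstacle: inside a single block $F_n$ we have no control over the distances. The plan is to use $\mathbb Z$-Ramsey again to thin out within blocks. The blocks $F_n$ are separated by large gaps: any distance between different blocks exceeds all distances inside earlier blocks, and the differences are pairwise separated by the $D$-conditions above.

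Define the $\mathbb Z$-invariant colouring $c(\{x,y\})=0$ iff $|y-x|\in\bigcup_n D(F_n)$, in the style of Lemma~\ref{L: invariant-partition}. The thinness conditions built into the strategy ensure that a difference realized inside some block is never realized between blocks. Take a $c$-homogeneous $H\in u$ with $H\subseteq\bigcup F_n$.
\begin{itemize}
\item If $H$ has colour $0$, then all of its distances lie inside blocks, which forces $H$ to be contained in a single finite block. This is impossible for $H\in u$.
\item Hence $H$ has colour $1$, so $H$ is a partial selector of $\{F_n\}$.
\end{itemize}
The argument in the Claim inside Theorem~\ref{T:Not Z-invariant Ramsey} then shows that $H$ is $1$-thin, provided the $F_n$ form a thin partition. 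Checking that I's moves guarantee all three thinness conditions of a thin partition, in particular $D(F_n,F_n)\cap D(F,F)=\varnothing$, is the delicate step. The large-gap requirement handles most of it, since every element of $D(F_n,F_n)$ either exceeds $\max D(F,F)$ or requires the extra avoidance shrinking described in (b). We may need to let I choose $A_n$ so that all distances inside $A_n$ avoid the finite set $D(F,F)$, which is again achievable by a Lemma~\ref{makedistancesbig}-type colouring with threshold $\max F$.

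Once these conditions are verified, $H\in u\cap\mathcal I_{\onethin}$, and Theorem~\ref{Zramsey+thinset-implica-ramsey} finishes the proof.
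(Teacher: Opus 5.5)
Your proposal is correct and follows essentially the same route as the paper: player I uses Lemma~\ref{makedistancesbig} and Corollary~\ref{more-general-make-distances-injective} in the P-point game. A run with $\bigcup_n F_n\in u$ then yields a same-block/different-block colouring whose homogeneous sets in $u$ must be selectors, hence $1$-thin, and Theorem~\ref{Zramsey+thinset-implica-ramsey} concludes. The step you call delicate is immediate from your clauses in (b): distances in $A_n$ exceeding $2\max F$ give $D(F_n,F_n)\cap D(F,F)=\varnothing$, the Corollary gives $D(F_n,F_n)\cap D(F,F_n)=\varnothing$, and avoiding $F+D(F,F)$ gives $D(F,F_n)\cap D(F,F)=\varnothing$. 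Your extra translate-avoidance is also worth keeping, since it rules out a difference inside $F_n$ reappearing between $F_n$ and an element of $A_{n+1}$ just above $\max F_n$, a case that the paper's listed requirements on $A_{n+1}$ do not literally exclude.
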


\begin{proof}
    We describe a strategy for player I in the P-point game for $u$.  Begin by playing $A_0=\mathbb N$ and, in the $n+1$-th inning, knowing player II's previous moves $F_0,\ldots,F_n$, satisfying $\max(F_i)<\min(F_{i+1})$ for $i<n$, we play an $A_n\in u$ such that $\min(A_n)>\max(F_n)$, $x,y\in A_n$ and $x<y$ implies $y-x>2\max(F_n)$ (which can be done by Lemma~\ref{makedistancesbig}) and also such that $D(A_{n+1},A_{n+1})\cap D(F_n,A_{n+1})=\varnothing$ (which can be done by Corollary~\ref{more-general-make-distances-injective}). Since $u$ is a P-point, the strategy we just described is not a winning strategy, so there is a run of the game, $\langle F_n\mid n<\omega\rangle$, such that $A=\bigcup_{n<\omega}F_n\in u$.

    Define $c:[A]^2\longrightarrow 2$ given by:
    \begin{equation*}
        c(\{x,y\})=\begin{cases}
            0,\text{ if }x,y\in F_n\text{ for some }n<\omega;\\
            1,\text{ if }x\in F_n\text{ and }y\in F_m\text{ for }m\neq n.
        \end{cases}
    \end{equation*}
    Note that $c$ is a $\mathbb Z$-invariant colouring: for if $x,y,z,w\in A$ are such that $y-x=w-z$ and, say, $x$ is the least of the four numbers, then if $x\in F_n$ we have two cases:
    \begin{description}
        \item[If $y\in F_n$] then it is not possible to have $w,z\in A\setminus F_n\subseteq A_{n+1}$ by construction, so $z\in F_n$ but then it is impossible that $w\in A\setminus F_n\subseteq A_{n+1}$, so it must be the case that $x,y,z,w\in F_n$ and $c(\{x,y\})=0=c(\{z,w\})$.
        \item[If $y\in F_m$ for $m\neq n$] (in which case, we must have $m>n$). Then it is impossible that $w,z\in F_n$, or that $z,w\in A\setminus F_n\subseteq A_{n+1}$, so basically the only option is $z\in F_n$, $w\in A\setminus F_n$ and therefore $c(\{x,y\})=1=c(\{z,w\})$.
    \end{description}
Note, also, that any set $X$ that is $c$-monochromatic is either monochromatic in colour $0$ and finite (contained within one single $F_n$), or infinite and thus monochromatic in colour $1$, in which case $X$ is a selector for the family $\{F_n\mid n<\omega\}$. Therefore, if we pick an $X\in u$, with $X\subseteq A$ and such that $[X]^2$ is $c$-monochromatic, then $X$ must be a selector for the family $\{F_n\mid n<\omega\}$. Thus, by construction, if $x,y,z,w\in X$ with $x<y$, $z<w$, and $x<z$, then if $x\in F_n$, we must have that $x,z,w\in A\setminus F_n\subseteq A_{n+1}$, which makes impossible to have $y-x=w-z$. Therefore, $X$ is a thin set, which implies, by Theorem~\ref{Zramsey+thinset-implica-ramsey}, that $u$ is a selective ultrafilter.
\end{proof}

\section{A partition result for translation-invariant colourings of the integers}

In this section we will complement the results from Section 2 and end with a question.

We begin by recalling the version of the Central Sets Theorem that we will use in this section (this particular formulation can be found in~\cite{hindman-de-strauss}, for a proof see~\cite[Prop. 8.21]{furstenberg}).

\begin{theorem}\label{thm:centralsets}
    Let $A$ be a central set. Then, whenever we have finitely many sequences $\langle a_n^1\mid n<\omega\rangle,\cdots,\langle a_n^k\mid n<\omega\rangle$, there exists a sequence $\langle x_n\mid n<\omega\rangle$ and a block sequence $\langle H_n\mid n<\omega\rangle$ of finite subsets of $\omega$ such that, for every $t\in\{1,\ldots,k\}$ we have
    \begin{equation*}
        \fs\left(x_n+\sum_{i\in H_t}a_t^i\mid n<\omega\right)\subseteq A.
    \end{equation*}
\end{theorem}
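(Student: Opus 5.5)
The statement is the Central Sets Theorem, quoted from the literature. I would prove it with the algebraic approach in $\beta\mathbb N$ rather than Furstenberg's dynamical one. I read the conclusion with the evident indexing: $\fs\big(x_n+\sum_{i\in H_n}a_i^t\mid n<\omega\big)\subseteq A$ for each $t\in\{1,\ldots,k\}$. Throughout, $A$ is central, so there is a minimal idempotent $p\in\beta\mathbb N$ with $A\in p$. Put
\begin{equation*}
A^\star=\{x\in A\mid -x+A\in p\}.
\end{equation*}
The standard fact about idempotents gives $A^\star\in p$, and $-x+A^\star\in p$ for every $x\in A^\star$.

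\textbf{Key lemma.} For every $B\in p$ and every $m<\omega$, there are $x\in\mathbb N$ and a finite $H\subseteq\omega$ with $\min H>m$ such that $x+\sum_{i\in H}a_i^t\in B$ for all $t\le k$. I would prove it in $Y=(\beta\mathbb N)^k$ with coordinatewise addition.
\begin{itemize}
\item For each $m$, let $E_m$ be the set of vectors $\big(x+\sum_{i\in H}a_i^1,\ldots,x+\sum_{i\in H}a_i^k\big)$ with $\min H>m$, together with the diagonal vectors $(x,\ldots,x)$.
\item Let $I_m$ be the same set without the diagonal vectors.
\item Set $T=\bigcap_m\overline{E_m}$. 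Then $T$ is a compact subsemigroup of $Y$ and $\bigcap_m\overline{I_m}$ is an ideal of $T$.
\item The diagonal $\bar p=(p,\ldots,p)$ lies in $T$ and is idempotent there.
\item The crucial claim is that $\bar p$ is minimal in $T$. Let $\bar q\le\bar p$ be a minimal idempotent of $T$. Projecting to each coordinate gives idempotents $\le p$ in $\beta\mathbb N$, and minimality of $p$ forces each of them to equal $p$. Hence $\bar q=\bar p$.
\item Since $\bar p$ is minimal, it lies in the smallest ideal $K(T)$, and $K(T)$ is contained in the ideal $\bigcap_m\overline{I_m}$.
\item Therefore the product neighbourhood $B\times\cdots\times B$ of $\bar p$ meets $I_m$ for every $m$, which is exactly the lemma.
\end{itemize}

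\textbf{Recursive construction.} I would build $x_n$ and a block sequence $H_n$ so that every finite sum $\sum_{n\in F}\big(x_n+\sum_{i\in H_n}a_i^t\big)$, with $F$ finite and nonempty, lies in $A^\star$ for all $t$.
\begin{itemize}
\item At stage $n$, only finitely many sums $s$ have been formed so far, and each lies in $A^\star$.
\item Set $B=A^\star\cap\bigcap_{s}(-s+A^\star)$. This set is in $p$ because each $-s+A^\star\in p$.
\item Apply the key lemma to $B$ with $m=\max H_{n-1}$. This yields $x_n$ and $H_n$ with $\min H_n>\max H_{n-1}$ and every new term in $B$.
\item Each new finite sum is then either a new term or $s$ plus a new term, so it lies in $A^\star$. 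The induction hypothesis is preserved.
\end{itemize}
Since $A^\star\subseteq A$, this gives the conclusion.

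The main obstacle is the minimality claim for $\bar p$ in $T$: that a minimal idempotent below the diagonal $\bar p$ must be diagonal. I would handle it through coordinate projections, using that the projections $\pi_t\colon T\to\beta\mathbb N$ are homomorphisms and preserve the order $\le$ on idempotents.
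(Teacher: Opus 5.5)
Your proposal is correct, and it takes a genuinely different route from the one the paper points to. The paper does not prove this statement. It quotes the formulation from Hindman, De and Strauss and refers to Furstenberg's Proposition 8.21, whose proof is topological-dynamical: central sets are there sets of the form $\{n\mid T^nx\in U\}$ with $U$ a neighbourhood of a uniformly recurrent point proximal to $x$, and the result comes out of a multiple recurrence argument. You instead give the algebraic proof in $(\beta\mathbb N)^k$, as in Hindman and Strauss's book.

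What your route buys is coherence with the paper. The paper defines central sets as members of minimal idempotents and uses exactly your $A^\star$ device in the proof of Theorem~\ref{thm:positivemonochromatic}. The dynamical route additionally needs the equivalence of the dynamical and algebraic definitions of central, though it avoids the machinery of compact right topological semigroups. Your minimality argument is a valid shortcut compared with the textbook route via $K((\beta\mathbb N)^k)=K(\beta\mathbb N)^k$ and $K(T)=K((\beta\mathbb N)^k)\cap T$: you take a minimal idempotent $\bar q\le\bar p$ of $T$ and project, which works because $\le$ is computed coordinatewise and $p$ is minimal in $\beta\mathbb N$. Your reading $\sum_{i\in H_n}a_i^t$ of the garbled display is the intended one.

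Two details deserve care.

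First, define $I_m$ as the set of all vectors $\bigl(x+\sum_{i\in H}a_i^1,\ldots,x+\sum_{i\in H}a_i^k\bigr)$ with $H\neq\varnothing$ and $\min H>m$, not as $E_m$ minus the diagonal. Such vectors may themselves be diagonal (always when $k=1$, or when the sequences coincide), and the set difference is then empty.

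Second, the claim that $T$ is a subsemigroup with $\bigcap_m\overline{I_m}$ an ideal of $T$ is where the real work sits, and you only assert it. Take $\vec y,\vec z\in T$ and a neighbourhood $U$ of $\vec y+\vec z$. Continuity of right translation by $\vec z$ gives $\vec v\in E_m$ with $\vec v+\vec z\in U$. Continuity of left translation by $\vec v\in\mathbb N^k$ then gives $\vec w\in E_r$ with $\vec v+\vec w\in U$, where $r=\max H$ for the $H$ of $\vec v$ (or $r=m$ if $\vec v$ is diagonal). Hence $\vec v+\vec w\in E_m\cap U$, and it lies in $I_m$ if either factor was taken from the $I$'s. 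Nonemptiness of $\bigcap_m\overline{I_m}$ follows from compactness.

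With these filled in, the key lemma and the recursive construction go through as you wrote them.
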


In fact, sets satisfying the conclusion of the previous Theorem are called J-sets~\cite[Def.14.8.1]{hindman-strauss}. A key result relating these combinatorially rich sets is that every piecewise syndetic set (in particular, every central set) is a J-set~\cite[Theorem 14.8.3]{hindman-strauss}. Here we focus on central sets, defined as sets that belong to a minimal idempotent element of $\beta\omega$, since they will be the ones that we utilize in our proof.

The main theorem of the section is the following.

\begin{theorem}\label{thm:positivemonochromatic}
    For every $\mathbb Z$-invariant colouring $c:[\mathbb Z]^2\longrightarrow r$ there exists a $c$-monochromatic set $X$ such that $X\notin\text{$k$-$\mathsf{thin}$}$ for all $k$.
\end{theorem}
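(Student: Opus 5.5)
The first step is a reduction. A $\mathbb Z$-invariant colouring $c\colon[\mathbb Z]^2\to r$ is determined by the colour of the difference: $c(\{x,y\})=\chi(|y-x|)$ for some $\chi\colon\mathbb N\to r$. Setting $\chi(-d)=\chi(d)$ turns $\chi$ into a symmetric colouring of $\mathbb Z\setminus\{0\}$. So it suffices to find a colour class $C$ and an infinite $X$ with $D(X)\subseteq C$ (signs irrelevant) such that, for every $k$, some $g\neq 0$ is realised as a difference $x'-x$ with $x,x'\in X$ by more than $k$ pairs. Since $C=-C$ and one of the $r$ classes is central, I fix a minimal idempotent $p\in\beta\mathbb Z$ and a class $C\in p$.

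The configuration I aim for comes from a two-row ladder. Given $y_0,y_1,y_2,\dots$, put
\[
a_i=\sum_{1\le j<i}y_j,\qquad b_i=y_0+a_i .
\]
Then $b_i-a_i=y_0$ for every $i$, so $y_0$ repeats as many times as there are rungs. The other differences are of two kinds: interval sums $y_j+\cdots+y_{i-1}$, and expressions $y_0\pm(y_j+\cdots+y_{i-1})$. Thus a ladder with $k+1$ rungs needs only that all these expressions lie in $C$; this is a Deuber $(m,1,1)$-type configuration, with leading variable $y_0$ and later coefficients in $\{-1,0,1\}$, so finite ladders of every length exist inside $C$. To obtain a single $X$ working for all $k$, I take $X=\bigcup_k L_k$, where $L_k$ is a ladder with $k+1$ rungs placed far to the right of $L_0\cup\dots\cup L_{k-1}$. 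The cross differences between blocks must also lie in $C$.

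For the gluing I plan to use Theorem~\ref{thm:centralsets}. Apply it to $C$ with finitely many sequences encoding the rungs: for example $a^t_n=t\cdot e_n$ with $e_n$ drawn from an IP set inside $C$, together with the sign choices needed for the $\pm$ terms. The output $x_n+\sum_{i\in H_n}a^t_i$ then has all finite sums in $C$. I would build the blocks $L_k$ recursively from disjoint pieces of the block sequence $\langle H_n\rangle$, in the style of a Milliken--Taylor construction. Every difference in $X$, whether inside a block or across blocks, should then be a finite sum of terms from one of the sequences provided by the theorem, hence in $C$. Each $L_k$ contains a difference repeated $k+1$ times, so $X\notin k$-$\mathsf{thin}$ for every $k$.

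The main obstacle is the signed sums $y_0-(y_j+\cdots+y_{i-1})$ and their cross-block analogues. Theorem~\ref{thm:centralsets} controls only positive finite sums, and on the semigroup side $p$ and $-p$ do not combine well. My first attempt is to exploit the symmetry $C=-C$ and to order each ladder so that $y_0$ dominates. Concretely, I would apply the theorem to the sequences $a^{\pm}$ and use the standard argument with $C^\star=\{x\in C: -x+C\in p\}$, choosing $y_0\in C^\star$ first. I would then choose the $y_j$ inside $(-y_0+C)\cap(y_0-C)\cap C^\star$, which requires $y_0-C\in p$. If that fails, the fallback is to shrink the ladders to the $4$-point pattern $\{a,a+g,b,b+g\}$, which repeats $g$ twice and needs only $\{g,h,h\pm g\}\subseteq C$. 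I would then stack such patterns with many rungs, exploiting partition regularity of Deuber $(m,p,c)$-systems in central sets.
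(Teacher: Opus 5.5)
Your reduction and your finite ladders are fine. The differences of one ladder form a Deuber $(m,1,1)$-configuration, and central sets contain those, signed coefficients included. The gap is the step that produces a single infinite $X$: gluing the blocks $L_k$ so that all cross differences lie in $C$.

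\begin{itemize}
\item As stated, Theorem~\ref{thm:centralsets} treats one fixed finite family of sequences. You do not explain how finitely many sequences would give ladders with unboundedly many rungs. Nor do you explain why a difference between blocks built from different pieces of $\langle H_n\rangle$ would be a finite sum from a single one of those sequences; you only say it ``should'' be.
\item Your proposed repair cannot work in general. Since $C=-C$, the condition $y_0-C\in p$ says $y_0+C\in p$, while $y_0\in C^\star$ says $-y_0+C\in p$, and these two can be incompatible. For instance, colour $n\neq 0$ by the lowest nonzero ternary digit of $|n|$, and take $p$ concentrated on $\mathbb N$. Every idempotent contains $3^{j}\mathbb N$ for all $j$. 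Subtracting $y_0$ from a multiple of a high power of $3$ turns its lowest digit $1$ into $2$ and vice versa. Hence, for the class $C\in p$ and any $y_0\in C$, we get $-y_0+C\in p$ iff $y_0>0$, and $y_0+C\in p$ iff $y_0<0$. So no $y_0\in C^\star$ satisfies your requirement.
\item The fallback with $4$-point patterns only shows that $X$ is not $1$-thin.
\end{itemize}

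The missing ideas are the paper's two ingredients.

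First, orient the ladder so that the rungs are disjoint consecutive gaps. Take points $x_0<x_1<\dots<x_{2l+1}$ with successive gaps $d,e_1,d,e_2,\dots,e_l,d$; this is your ladder with $y_0=d$ and $y_j=d+e_j$. Then every difference is a \emph{positive} sum $e_{i+1}+\dots+e_{i+k}+td$ with $k-1\le t\le k+1$, and your minus signs disappear. Theorem~\ref{thm:arbitrarilylong} finds these $l$-patterns $L(s)$, $s=\langle d,e_1,\dots,e_l\rangle$, in every central set by iterating the Central Sets Theorem.

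Second, glue with the idempotent rather than with the Central Sets Theorem.
\begin{itemize}
\item Maintain the invariant $D(X_{l-1})\subseteq C^\star$.
\item Since $-y+C^\star\in p$ for every $y\in C^\star$, the set $B=C^\star\cap\bigcap_{y\in D(X_{l-1})}(-y+C^\star)$ is in $p$. Hence $B$ is central and contains some $L(s)$.
\item Place the new block so that its least point is $\max X_{l-1}$. Then every cross difference has the form $y+x$ with $y\in D(X_{l-1})$ and $x\in L(s)\subseteq B$, so it lies in $C^\star$, and the recursion continues.
\end{itemize}
With this gluing your own Deuber ladders would also work, because the signed differences only ever occur inside a single block.
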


The remainder of the section will be used to prove this theorem. For this we introduce the following definitions.

\begin{definition}
Let $s=\langle d,a_1,\ldots,a_l\rangle$ be a sequence of natural numbers of length $l+1$.
\begin{enumerate}
    \item Given a triple $(i,k,t)$ of elements of $\omega$, we define
\begin{equation*}
    (i,k,t)*s=a_{i+1}+a_{i+2}+\cdots+a_{i+k}+td,
\end{equation*}
with the convention that, if $k=0$, then $(i,k,t)*\vec{a}=td$.
    \item The {\bf $l$-pattern generated by $s$} is the set
\begin{equation*}
    L(s)=\left\{(i,k,t)*\vec{a}\mid 0\leq k\leq l,0\leq i<l,\text{ and }k-1\leq t\leq k+1\right\}.
\end{equation*}
\end{enumerate}
\end{definition}

The motivation for these definitions comes from an analysis of $k$-thin sets and difference sets. More concretely, suppose we have a set $X$ whose elements, enumerated increasingly, are $x_0<x_1<\cdots<x_{2l}<x_{2l+1}$; suppose furthermore that there is a sequence $s=(d,a_1,\ldots,a_l)$ such that $x_{2k+1}-x_{2k}=d$ and $x_{2k+2}-x_{2k+1}=a_k$ for all $k$. The fact that so many distances between consecutive elements of $X$ equal $d$ shows that $X$ is not an $l$-thin set. Furthermore, it is not hard to check that in this case we will have $D(X)=L(s)$. So $l$-patterns arise naturally from considerations regarding how to build sets that are not $l$-thin.

\begin{proposition}
Let $s=(d,a_1,\ldots,a_l,a_{l+1})$ be an $(l+2)$-sequence, and denote by $t=(d,a_1,\ldots,a_l)$ the $(l+1)$-sequence that results from dropping the last entry of $s$. Then, the $(l+1)$-pattern generated by $s$ is given by
\begin{equation*}
    L(s)=L(t)\cup\{y+a_{l+1}\mid y\in L(t)\}\cup\{y+a_{l+1}+d\mid y\in L(t)\}\cup\{a_{l+1},a_{l+1}+d\}.
\end{equation*}
\end{proposition}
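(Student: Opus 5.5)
The plan is to prove the identity by double inclusion, unwinding the definition of $(i,k,t)*s$ and splitting the triples $(i,k,t)$ that generate $L(s)$ according to whether the block of summands $a_{i+1},\ldots,a_{i+k}$ reaches the new entry $a_{l+1}$. Note that for any triple with $i+k\le l$ one has $(i,k,t)*s=(i,k,t)*t$, since only entries common to $s$ and $t$ are involved. The range constraints on $i$, $k$ and $t$ also carry over between $L(s)$ and $L(t)$ in this case.

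For the inclusion $\subseteq$, I take $(i,k,t)*s\in L(s)$ and distinguish three cases.

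\textbf{Case $i+k\le l$.} The element lies in $L(t)$ directly.

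\textbf{Case $k\ge 2$ and $i+k=l+1$.} The element equals $\bigl(a_{i+1}+\cdots+a_{l}+td\bigr)+a_{l+1}$. The bracket is $(i,k-1,t')*t$ for a suitable $t'$ with $k-2\le t'\le k$. If $t\le k$ I take $t'=t$, giving an element of $\{y+a_{l+1}\}$. If $t=k+1$ I take $t'=k$ and absorb the extra $d$, giving an element of $\{y+a_{l+1}+d\}$.

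\textbf{Case $k=1$, $i=l$.} The element is $a_{l+1}+td$ with $t\in\{0,1,2\}$. The values $t=0,1$ give the two listed singletons. The value $t=2$ gives $(d)+a_{l+1}+d$, and $d=(0,0,1)*t\in L(t)$.

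For the inclusion $\supseteq$, the singletons and $L(t)$ are handled by the same bookkeeping run backwards. The key step is to show that $y+a_{l+1}$ and $y+a_{l+1}+d$ lie in $L(s)$ for $y\in L(t)$. When $y=(i,k,t')*t$ is a ``tail'' element, meaning $i+k=l$, the previous computation reverses verbatim: $y+a_{l+1}=(i,k+1,t')*s$, and $y+a_{l+1}+d=(i,k+1,t'+1)*s$, with $t'$ and $t'+1$ inside $[k,k+2]$. For the remaining $y$, I would use the realization described after the definition: the increasing set $x_0<\cdots<x_{2l+1}$ with alternating gaps $d,a_1,\ldots$ satisfies $D(X)=L(t)$. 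Appending the two points $x_{2l+2}=x_{2l+1}+a_{l+1}$ and $x_{2l+3}=x_{2l+2}+d$ gives a set $X'$ with $D(X')=L(s)$. Each new difference $x_{2l+2}-x_j$ or $x_{2l+3}-x_j$ decomposes as an old difference plus $a_{l+1}$ or $a_{l+1}+d$, and I would aim to match the right-hand side against $D(X')$ through this decomposition.

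I expect this last matching to be the main obstacle. An arbitrary $y\in L(t)$ corresponds to a difference $x_m-x_j$ whose block need not end at $x_{2l+1}$, so $y+a_{l+1}$ is not obviously again a difference of $X'$. I would first check carefully which $y$ actually arise in the decomposition of $D(X')$, namely those with $m=2l+1$. If the stated form forces extra elements, I would then verify, using the convention on the range of $i$ and $k$, whether they are nonetheless produced by some triple $(i,k,t)$ with $i+k\le l+1$ under the definition as written.
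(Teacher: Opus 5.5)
Your proof that $L(s)$ is contained in the right-hand side is correct, for $l\ge 1$ (when $l=0$ the convention $0\le i<l$ makes $L(t)$ empty). You split the triples into $i+k\le l$ and $i+k=l+1$ and absorb an extra $d$ when $t=k+1$. That is exactly the check the paper passes over as ``straightforward'', and it is all that the induction in Theorem~\ref{thm:arbitrarilylong} needs.

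\textbf{The genuine gap is the reverse inclusion, and it cannot be closed, because it is false.} Your ``tail'' step already fails. For $y=(i,k,t')*t$ with $i+k=l$, the coefficient $t'$ ranges over $[k-1,k+1]$. So your claim $t'\in[k,k+2]$ breaks for $t'=k-1$, which is a block beginning and ending with an $a$-gap. In that case $y+a_{l+1}$ would need the inadmissible triple $(i,k+1,k-1)$.

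Concretely, take $l=1$, $t=(1,10)$, $s=(1,10,100)$. Then $L(t)=\{0,1,10,11,12\}$ and $L(s)=\{0,1,10,11,12,100,101,102,111,112,113\}$. But $110=10+a_2$ lies in $\{y+a_2\mid y\in L(t)\}$ and not in $L(s)$.

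For $l\ge 2$ the non-tail elements give further counterexamples, such as $a_1+a_{l+1}$. This is a non-contiguous sum, and for generic values it equals no $(i,k,t)*s$ under any reading of the index conventions. So the final verification you plan must come out negative.

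What your realization argument does prove is a corrected identity. There, the two translated copies of $L(t)$ are replaced by the set of differences ending at the top point,
\[
E=\{x_{2l+1}-x_j\mid j<2l+1\}=\{d\}\cup\{(i,k,t')*t\mid k\ge 1,\ i+k=l,\ t'\in\{k,k+1\}\},
\]
that is, the elements of $L(t)$ whose block ends with the last $d$-gap. The corrected identity is $L(s)=L(t)\cup\{y+a_{l+1}\mid y\in E\}\cup\{y+a_{l+1}+d\mid y\in E\}\cup\{a_{l+1},a_{l+1}+d\}$.

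Both inclusions follow from your own bookkeeping:
\begin{itemize}
\item The $y$ produced in your $\subseteq$ cases always lies in $E$.
\item Conversely, for $y=(i,k,t')*t\in E$ one has $y+a_{l+1}=(i,k+1,t')*s$ and $y+a_{l+1}+d=(i,k+1,t'+1)*s$, with $t',t'+1\in[k,k+2]$.
\item For the element $d\in E$, one has $d+a_{l+1}=(l,1,1)*s$ and $2d+a_{l+1}=(l,1,2)*s$.
\end{itemize}
This agrees with $D(X')=D(X)\cup(E+a_{l+1})\cup(E+a_{l+1}+d)\cup\{a_{l+1},a_{l+1}+d\}$. The only discrepancy is the element $0$, which lies in every pattern (via $k=0$ with coefficient $0$) but in no difference set.

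So instead of seeking a proof of the stated equality, write up your $\subseteq$ argument, the counterexample, and this corrected form.
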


\begin{proof}
Straightforward.
\end{proof}

The following theorem ensures that central sets contain $l$-patterns for arbitrarily large values of $l$. The proof is done by induction on $l$, and the inductive hypothesis that we need is a little stronger, so this is reflected in the way we state the theorem.

\begin{theorem}\label{thm:arbitrarilylong}
Let $A$ be a central set, and let $l<\omega$. Then there exist sequences $\langle d_n\mid n<\omega\rangle,\langle a_n^1\mid n<\omega\rangle,\ldots,\langle a_n^l\mid n<\omega\rangle$ such that, for all $0\leq i<l,0\leq k\leq l,k-1\leq t\leq k+1$, letting $s_l^n=\langle d_n,a_n^1,\ldots,a_n^l\rangle$ and $\vec{x}^{(i,k,t)}=\langle (i,k,t)*s_l^n\mid n<\omega\rangle$, we have $\fs(\vec{x}^{(i,k,t)})\subseteq A$. In particular (since each element of $L(s^n)$ is the $n$-th term of one of the sequences $\vec{x}^{(i,k,t)}$), the set $A$ contains $l$-patterns for arbitrarily large $l$.
\end{theorem}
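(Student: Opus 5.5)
Induction on $l$ driven by the Central Sets Theorem (Theorem~\ref{thm:centralsets}), in the strengthened form: for each $l$ there are sequences $\langle d_n\rangle,\langle a^1_n\rangle,\ldots,\langle a^l_n\rangle$ such that, for every admissible triple $(i,k,t)$, the sequence $\vec{x}^{(i,k,t)}$ has all its finite sums in $A$. The point is that each new entry $a^{l+1}$ is built so that its finite sums, and finite sums of suitable shifts of it, land in $A$, with the shifts being the old sequences $\vec{x}^{(i,k,t)}$ (and these plus $d_n$).

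\emph{Base case $l=0$.} Here $s^n=\langle d_n\rangle$ and the only relevant element of $L(s^n)$ is $d_n=(0,0,1)*s^n$ (with $t=0$ giving $0$, ignored as trivial). Apply Theorem~\ref{thm:centralsets} with the single sequence $a_n=0$. We get $\langle x_n\rangle$ with $\fs(x_n)\subseteq A$, and we put $d_n=x_n$. If the bookkeeping at some $l$ wants $i<l$, the case $l=1$ can be taken as the base instead, obtained by one application of the inductive step below.

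\emph{Inductive step.} Assume the sequences for $l$ are given, and write $\vec{y}^{(i,k,t)}_n$ for the old sequences. By the Proposition above, the new pattern is
\[
L(s)=L(t)\cup\{y+a_{l+1}\}\cup\{y+a_{l+1}+d\}\cup\{a_{l+1},a_{l+1}+d\},
\]
so the new sequences are the old ones, plus $a^{l+1}$ added to each old one, plus $a^{l+1}+d$ added to each old one, plus $a^{l+1}$ and $a^{l+1}+d$. Apply Theorem~\ref{thm:centralsets} to the finitely many sequences $\vec{y}^{(i,k,t)}$, $\vec{y}^{(i,k,t)}+\vec{d}$, the zero sequence and $\vec{d}$. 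This yields $\langle x_m\rangle$ and a block sequence $\langle H_m\rangle$. Set
\[
a^{l+1}_m=x_m, \qquad \tilde d_m=\sum_{j\in H_m}d_j, \qquad \tilde a^r_m=\sum_{j\in H_m}a^r_j .
\]
Passing to these block sums keeps the old finite-sum property, because $\vec{y}^{(i,k,t)}$ is linear in $(d,a^1,\ldots,a^l)$: a finite sum of the blocked sequence is a finite sum of the original one. The conclusion of Theorem~\ref{thm:centralsets} then gives the finite-sum property for each shifted sequence $x_m+\sum_{j\in H_m}(\text{old})_j$, which is exactly the new term $(i',k',t')*\tilde s^m$. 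Finally, re-index the triples: a term $y+a_{l+1}$ with $y=(i,k,t)*t$ corresponds to a consecutive block ending at $a_{l+1}$, and the $+d$ versions shift $t$ by one.

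\emph{Main obstacle.} The hard step is this combinatorial matching: checking that the triples produced this way are exactly those with $0\le i<l+1$, $0\le k\le l+1$ and $k-1\le t\le k+1$. A direct concern is that $a_{l+1}$ must follow $a_{i+k}$ consecutively, whereas $y+a_{l+1}$ for arbitrary $y\in L(t)$ does not. Since the Proposition is asserted as straightforward, I would rely on it. If the exact equality fails, I would instead only apply the theorem to the sequences actually needed, namely the $\vec{y}^{(i,k,t)}$ with $i+k=l$, together with $0$ and $\vec{d}$, and then verify that the terms needed for $l+1$ are covered.
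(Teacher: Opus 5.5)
Your proposal is correct and is essentially the paper's argument: induction on $l$, one application of the Central Sets Theorem per step to the old sequences and their $d$-shifts, blocking the old entries along $H_n$ and building the new last entry from $x_n$. The paper instead takes $a^{l+1}_n=x_n+\sum_{j\in H_n}\delta_j$ and feeds in the old sequences shifted by $\delta_n$ and $2\delta_n$, which is only a reparametrization of your choice ($a^{l+1}_m=x_m$ with the old sequences, their $+\vec d$ shifts, $0$ and $\vec d$). Your worry in the last paragraph is unnecessary. You only need the inclusion of $L(s)$ into the right-hand side of the Proposition, which is true, even though the stated equality fails for $l\ge 2$ because of non-consecutive sums such as $a_1+a_{l+1}$. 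Feeding extra sequences into the Central Sets Theorem costs nothing.
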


\begin{proof}
The proof goes by induction on $l$. For $l=0$ we only need to take a sequence $\langle d_n\mid n<\omega\rangle$ such that $\fs(\langle d_n\mid n<\omega\rangle)\subseteq A$, which is simply Hindman's theorem (since $A$ is central, in particular it is an IP-set).

Now suppose the theorem already holds for some $l$ and let $\langle \delta_n\mid n<\omega\rangle,\langle \alpha_n^1\mid n<\omega\rangle,\ldots,\langle \alpha_n^l\mid n<\omega\rangle$ be the sequences as in the statement of the theorem. We now apply the Central Sets Theorem~\ref{thm:centralsets} to the set $A$ along with the sequences
\begin{equation*}
    \langle(i,k,t)*s_l^n+\delta_n\mid n<\omega\rangle
\end{equation*}
and
\begin{equation*}
    \langle(i,k,t)*s_l^n+2\delta_n\mid n<\omega\rangle
\end{equation*}
for each $(i,k,t)$ such that $1\leq i<l$, $0\leq k\leq l$, and $k-1\leq t\leq k+1$; this yields a sequence $\vec{x}=\langle x_n\mid n<\omega\rangle$ of numbers and a block sequence $\langle H_n\mid n<\omega\rangle$ of finite sets satisfying the statement of Theorem~\ref{thm:centralsets}. Define, for each $n<\omega$,
\begin{eqnarray*}
d_n & = & \sum_{i\in H_n}\delta_i, \\
a_n^j & = & \sum_{i\in H_n}\alpha_i^j, \ \ \ \text{ for }1\leq j\leq l, \\
a_n^{l+1} & = & x_n+\sum_{i\in H_n}\delta_i.
\end{eqnarray*}

We now show that, with $\vec{x}^{(i,k,t)}$ defined as in the statement of the theorem for all of the $l+2$ recently obtained sequences, we have $\fs(\vec{x}^{(i,k,t)})\subseteq A$. If $i+k\leq l$ then the desired result follows immediately by the inductive hypothesis. On the other hand, if $i+k=l+1$, then by looking at the terms of $\vec{x}^{(i,k,t)}$ we obtain 
\begin{equation*}
\vec{x}^{(i,k,t)}=\begin{cases}
\langle x_n+\sum_{i\in H_n}\delta_i\rangle,\ \ \ \text{ if }(i,k,t)=(l,1,0), \\
\langle x_n+\sum_{i\in H_n}2\delta_i\rangle,\ \ \ \text{ if }(i,k,t)=(l,1,1), \\
\langle x_n+\sum_{i\in H_n}\left((i,k-1,t)*s_l^n+\delta_i\right)\rangle,\ \ \ \text{ if }i\leq l\text{ and }t\leq k, \\
\langle x_n+\sum_{i\in H_n}\left((i,k-1,t)*s_l^n+2\delta_i\right)\rangle,\ \ \ \text{ if }i\leq l\text{ and }t=k+1.
\end{cases}
\end{equation*}
All of these sequences have their set of finite sums contained within $A$, by the choice of the sequence of $x_n$ utilizing the Central Sets Theorem.
\end{proof}

With the previous theorem in hand, we are in good shape for finally providing a proof of Theorem~\ref{thm:positivemonochromatic}.

\begin{proof}[Proof of Theorem~\ref{thm:positivemonochromatic}]
Given a $\mathbb Z$-invariant colouring $c:[\mathbb Z]^2\longrightarrow 2$, define a colouring of the natural numbers $\tilde{c}:\mathbb N\longrightarrow 2$ by $\tilde{c}(n)=c(0,n)$ (so that also $\tilde{c}(n)=c(k,n+k)$ for every $k$ due to the $\mathbb Z$-invariance of $c$). Let $u$ be a minimal idempotent in $\beta\mathbb N$, and let $A\in u$ be $\tilde{c}$-monochromatic. Then $A$ is a central set. We will find a set $X\subseteq\mathbb Z$ such that $D(X)$ contains arbitrarily long $l$-patterns, and such that $D(X)\subseteq A$; this will imply on the one hand that $X$ is not $k$-thin for any $k$, and on the other hand that $c[[X]^2]=\tilde{c}[D(X)]\subseteq A$ (and hence $X$ will be $c$-monochromatic). To do this, we recursively define an increasing sequence of finite sets $X_l$, containing some $l$-pattern, such that $D(X_l)\subseteq A^*=\{n\in A\mid A-n\in u\}$. In the first step of the induction, it suffices to let $X_0$ be any set with two elements whose distance belongs to $A^*$. Now assume we already have our finite set $X_{l-1}$ with $D(X_{l-1})\subseteq A^*$. Then the set
\begin{equation*}
    B=A^*\cap\left(\bigcap_{y\in D(X_{l-1})}(A-y)\right)
\end{equation*}
belongs to $u$, and is therefore central. By Theorem~\ref{thm:arbitrarilylong} there is a sequence $s=(d,a_1,\ldots,a_l)$ such that $L(s)\subseteq B$. Arbitrarily pick numbers $\max(X_{l-1})<x_0<x_1<\cdots<x_2l<x_{2l+1}$ such that $x_{2k+1}-x_{2k}=d$ and $x_{2k+2}-x_{2k+1}=a_k$ for all $k$, and let $X_l=X_{l-1}\cup\{x_0,\ldots,x_{2l+1}\}$. It is readily checked that each element of$ D(X_l)$ is either an element of $L(s)$ (any distance between two of the $x_k$), or an element of $D(X_{l-1})$ (any distance between two elements of $X_{l-1}$), or an element of the form $y+x$ with $y\in D(X_{l-1})$ and $x\in L(s)$ (any distance between an element of $X_{l-1}$ and one of the $x_k$). In either case (the first case by the choice of $B$ and $s$, the second case by induction hypothesis, and the third case because $L(s)\subseteq A-y$ for any $y\in D(X_{l-1})$ by construction) we can conclude that $D(X_l)\subseteq A^*$, and the induction can continue. In the end, it suffices to make $X=\bigcup_{n<\omega}X_l$.
\end{proof}

\begin{question}
    Is it the case that for every $\mathbb Z$-invariant colouring $c:[\mathbb Z]^2\longrightarrow 2$ there exists a $c$-monochromatic set $X\in \mathcal{I}_{\text{$k$-$\mathsf{thin}$}}^+$, that is,
    \[
    \mathbb{Z} \overset{\text{left-inv.}}{\xrightarrow{\hspace{1.1cm}}} (\mathcal{I}_{\text{$k$-$\mathsf{thin}$}}^+)^{2}_{2} \,?
    \]
\end{question}



\bibliographystyle{amsplain}

\bibliography{References}

\end{document}